\theoremstyle{plain}
\newtheorem{theorem}{Theorem}[section]
\newtheorem{corollary}[theorem]{Corollary}
\newtheorem{lemma}[theorem]{Lemma}
\newtheorem{proposition}[theorem]{Proposition}
\newtheorem*{theorem*}{Claim}
\theoremstyle{definition}
\newtheorem{remark}[theorem]{Remark}
\numberwithin{equation}{section}
\newcommand{\hcal}{\mathcal{H}}
\newcommand{\R}{\mathbb{R}}
\newcommand{\jacobi}{J_u}
\newcommand{\vv}{b}
\newcommand{\bb}{\widehat{b}}
\renewcommand{\div}{{\rm div}}
\newcommand{\tr}{{\rm tr}}
\renewcommand{\d}{\mathop{}\!\mathrm{d}}
\newcommand{\indicator}{\mathds{1}}
\newcommand{\supp}{\, {\rm supp} \,}
\newcommand{\elliptic}{c_0}
\newcommand{\bounded}{C_0}
\newcommand{\cc}{\textrm{\bf c}}
\newcommand{\anew}{\mathcal{A}}
\newcommand{\azero}{\mathcal{A}_{0}}
\newcommand{\nn}{\textrm{\bf n}}
\newcommand{\NN}{\textrm{\bf N}}
\newcounter{desccount}
\newcommand{\descref}[1]{\hyperref[#1]{#1}}
\begin{document}
	
\title[Energy estimate up to the boundary for stable solutions]{
Energy estimate up to the boundary for stable solutions to semilinear elliptic problems
}
\author{I\~{n}igo U. Erneta}
\address{I.~U. Erneta\textsuperscript{1,2} ---
\textsuperscript{1}Centre de Recerca Matem\`{a}tica, Edifici C, Campus Bellaterra, 08193 Bellaterra, Spain \&
\textsuperscript{2}Universitat Polit\`{e}cnica de Catalunya, Departament de Matem\`{a}tiques, Diagonal 647, 08028 Barcelona, Spain }
\email{inigo.urtiaga@upc.edu}

\thanks{The author acknowledges financial support from MINECO grant MDM-2014-0445-18-1 through the Mar\'{i}a de Maeztu Program for Units of Excellence in R\&D.
He is additionally supported by Spanish grants MTM2017-84214-C2-1-P and PID2021-123903NB-I00 funded by MCIN/AEI/10.13039/501100011033 and by ERDF ``A way of making Europe''.
The author is also supported by Catalan project 2021 SGR 00087.
}

\maketitle

\begin{abstract}

We obtain a universal energy estimate up to the boundary for stable solutions of semilinear equations with variable coefficients.
Namely, we consider solutions to $- L u = f(u)$, where $L$ is a linear uniformly elliptic operator
and $f$ is $C^1$,
such that the linearized equation $-L - f'(u)$ has nonnegative principal eigenvalue.
Our main result is an estimate for the $L^{2+\gamma}$ norm of the gradient of stable solutions 
vanishing on the flat part of a half-ball,
for \emph{any} nonnegative and nondecreasing $f$.
This bound only requires the elliptic coefficients to be Lipschitz.
As a consequence, our estimate continues to hold in general $C^{1,1}$ domains if we further assume the nonlinearity $f$ to be convex.
This result is new even for the Laplacian, for which a $C^3$ regularity assumption on the domain was needed.
%
%
%
\end{abstract}


\section{Introduction}

Given a bounded domain $\Omega \subset \R^n$ and a function $f \in C^1(\R)$,
we consider stable solutions $u\colon \overline{\Omega} \to \R$
to the semilinear boundary value problem
\begin{equation}\label{pb:omega}
\left\{
\begin{array}{cl}
- L u = f(u) & \text{ in } \Omega\\
u = 0 & \text{ on } \partial \Omega.
\end{array}
\right.
\end{equation}
Throughout the text, 
$L$ denotes a uniformly elliptic operator of the form
\begin{equation}
\label{def:op}
L = a_{ij}(x) \partial_{ij} + \vv_i(x) \partial_i,
\quad a_{ij}(x) = a_{ji}(x).
\end{equation}
A solution $u$ of \eqref{pb:omega} is called \emph{stable} if the principal eigenvalue 
(with respect to Dirichlet conditions)
of the linearized equation $\jacobi := L + f'(u)$
 is nonnegative.\footnote{Here we adopt the sign convention $\jacobi \varphi = - \mu \varphi$ for the eigenvalues $\mu$ of $\jacobi$.}
When the problem is variational, this amounts to the nonnegativity of the second variation, a necessary condition for the minimality of $u$. 

The goal of the present article is to obtain a universal energy estimate for stable solutions to \eqref{pb:omega}
in the spirit of the pioneering work of Cabr\'{e}, Figalli, Ros-Oton, and Serra~\cite{CabreFigalliRosSerra} 
for the Laplacian.
In~\cite{CabreFigalliRosSerra}, the authors proved two types of a priori bounds for 
classical stable solutions
when $L = \Delta$.
Namely,
a control of the $L^{2+\gamma}$ norm of the gradient 
(for some $\gamma > 0$) by the $L^1$ norm of the function, valid in all dimensions,
and an estimate of the 
H\"{o}lder
norm of the solution when $n \leq 9$.
The latter result is optimal,
since there are examples of singular (unbounded) stable solutions in dimensions $n \geq 10$.
A notable feature of these estimates is that they do not depend on the nonlinearity, 
which is assumed to be nonnegative, nondecreasing, and convex.
Thanks to this,
the paper \cite{CabreFigalliRosSerra} answered positively two long-standing open questions of 
Brezis and V\'{a}zquez~\cite{BrezisVazquez} and of Brezis~\cite{Brezis-IFT}
concerning the regularity of extremal solutions (which are $L^1$ limits of classical stable solutions), recalled briefly below.

Here we will be interested in extending the $L^{2+\gamma}$ energy estimate to operators with variable coefficients as in \eqref{def:op}.
Our main achievement is to make 
the constants in our bounds depend on the $C^{0,1}$ norm of $a_{ij}$ and the $L^{\infty}$ norm of $\vv_i$,
this being the major difficulty in our proofs.
As a consequence, we will obtain 
a global estimate in $C^{1,1}$ domains.
This result is new even when $L$ is the Laplacian,
as~\cite{CabreFigalliRosSerra} required a $C^3$ regularity assumption on the domain.
For this, starting from a curved boundary, we flatten it out locally by a change of variables.
In the new coordinates, our solution is still a stable solution to an equation of the form~\eqref{pb:omega},
where the new operator $L$ now involves the derivatives of 
the flattening map.
More precisely, the new coefficients $a_{ij}$ depend on the differential of this map, 
while $\vv_i$ additionally depend on its Hessian.
It follows that the $C^{0,1}$ and $L^{\infty}$ regularity of the coefficients corresponds to a $C^{1,1}$ domain.
In particular, it will suffice to prove a priori estimates in half-balls with the stated dependence on the coefficients.

Furthermore, when $n \leq 9$, our energy bound (as well as the auxiliary Hessian estimates in Theorem~\ref{thm:sz} below) will be crucial to establish H\"{o}lder estimates up to the boundary in $C^{1,1}$ domains.
We will tackle this issue in our forthcoming paper~\cite{ErnetaBdy2}, where we extend the optimal $C^{\alpha}$ bounds of \cite{CabreFigalliRosSerra} to equations with coefficients.
The previous work~\cite{CabreFigalliRosSerra} relied on delicate contradiction-compactness arguments which do not allow to quantify the constants in the estimates.
Here, thanks to  a new device of Cabr\'{e}~\cite{CabreRadial} for the Laplacian in flat domains we will be able to give a direct, quantitative proof of all our estimates in~\cite{ErnetaBdy2}.


The study of the regularity of stable solutions to \eqref{pb:omega} was originally motivated by problems in combustion theory.
In that setting, the interest lies in positive, nondecreasing, convex, and superlinear nonlinearities $f$ accounting for the reaction of a combustible mixture.
It is also natural to consider a multiple $\lambda f$ of the nonlinearity,
where $\lambda > 0$ is a nondimensional parameter measuring the relative strength of the reaction with respect to the processes modeled by $L$.
Applying the implicit function theorem at $\lambda = 0$ and by the properties of $f$, one obtains 
a
branch of positive classical stable solutions $\{u_{\lambda} \}_{0 < \lambda < \lambda^{\star}}$
of 
$-L u_{\lambda} = \lambda f(u_{\lambda})$ in $\Omega$, $u_{\lambda} = 0$ on $\partial \Omega$,
where $0 < \lambda^{\star} < \infty$ is the maximal threshold for the existence of classical solutions to this problem.
Moreover, by maximum principle, $\lambda \mapsto u_{\lambda}$ is increasing in $(0, \lambda^{\star})$ and converges in $L^1$ to a 
weak (distributional) 
solution $u^{\star}$, the so called \emph{extremal solution}; see, for instance~\cite{CrandallRabinowitz, Dupaigne, Brezis-IFT}.

By construction, the extremal solution $u^{\star}$
is a priori only in $L^1$ and can be unbounded.
In~\cite{BrezisVazquez}, Brezis and V\'{a}zquez gave a characterization of singular (unbounded) extremal solutions in the energy space $W^{1,2}_{0}(\Omega)$ 
when $L$ is the Laplacian.
Their result led them to ask whether extremal solutions are necessarily in this space;
see~\cite{BrezisVazquez}*{Problem 1}.
This question has been addressed in various works,
always considering the model operator $L = \Delta$.
The first result in this direction was obtained by Nedev~\cite{Nedev},
who showed the validity of the claim for $n \leq 5$.
Later, 
assuming $\Omega$ to be convex
(or, more generally, ``bean shaped''), he was able to extend this result to all dimensions in an unpublished preprint~\cite{Nedev2}
(which is recalled and proven again in~\cite{CabreSanchon}).
Then, Cabr\'{e} and Capella studied radial stable solutions in $\Omega = B_1$, 
showing that $u^{\star} \in W^{3, 2}(B_1)$ in this case.
After that, Cabr\'{e} and Ros-Oton~\cite{CabreRosOton-DoubleRev} proved the claim for $n \leq 6$ in domains of double revolution,
and Villegas~\cite{Villegas} obtained the same result in general smooth domains.
Recently, Cabr\'{e}, Figalli, Ros-Oton, and Serra~\cite{CabreFigalliRosSerra}
settled the conjecture, showing that $u^{\star} \in W^{1,2+\gamma}_{0}(\Omega)$ in all dimensions,
where $\gamma > 0$ depends only on $n$, and $\Omega$ is of $C^3$ class.
For this, as mentioned above, they proved a universal energy estimate for smooth stable solutions.
Then, they applied it to the functions $\{u_{\lambda}\}_{0 < \lambda < \lambda^{\star}}$ and passed to the limit as $\lambda \to \lambda^{\star}$.

For further regularity properties of $u^{\star}$, the dimension of the space plays a critical role.
Notice that, by the linear theory, the smoothness of $u^{\star}$ follows from its boundedness.
When $n \geq 10$, explicit unbounded extremal solutions had been known for a long time, while no such examples were found in lower dimensions.
In~\cite[Open problem 1]{Brezis-IFT}, Brezis asked whether the extremal solution was always bounded in the latter case.
His question prompted a series of works trying to establish $L^{\infty}$ bounds for classical stable solutions in the range $n \leq 9$.
Recently, in the breakthrough paper~\cite{CabreFigalliRosSerra} mentioned above, the question was answered positively for the Laplacian in $C^3$ domains.
For more information on that problem, see the references in~\cite{CabreFigalliRosSerra} or, for instance,~\cite{ErnetaBdy2}.

\subsection{Main results}
We assume that the coefficient matrix $A(x) = (a_{ij}(x))$ is uniformly elliptic in $\Omega$, that is, there are positive constants $\elliptic$, $\bounded$ such that 
\begin{equation}
\label{elliptic}
\elliptic |p|^2 \leq a_{ij}(x) p_i p_j \leq \bounded |p|^2 \quad \text{ for all } p \in \R^n.
\end{equation}
Our global theorem in $C^{1,1}$ domains requires the assumption
\begin{equation}
\label{reg:coeffs}
a_{ij} \in C^{0,1}(\overline{\Omega}), \quad \vv_i \in L^{\infty}(\Omega) \cap C^0(\Omega),
\end{equation}
For our local results in half-balls, we further need the auxiliary condition
\begin{equation}
\label{reg:b}
\vv_i \in C^{0}(\overline{\Omega}).
\end{equation}
We will be able to remove \eqref{reg:b} by an approximation argument, 
as explained in Remark~\ref{remark:approximation}.

Since we always assume
$a_{ij} \in C^{0,1}(\overline{\Omega}) = W^{1, \infty}(\Omega)$, 
we can write $L$ in divergence form
\begin{equation}
\label{op:nondiv}
L u = \div \left( A(x) \nabla u\right) + \bb(x) \cdot \nabla u,
\end{equation}
where $\bb(x) = (\bb_{i}(x))$ is the vector field given by 
\begin{equation}
\label{def:bvector}
\bb_i(x) = \vv_i(x) - \partial_k a_{ki}(x). 
\end{equation}
Notice that $\bb_i$ is in $L^{\infty}(\Omega)$
by assumption \eqref{reg:coeffs}.

Having specified the regularity of the coefficients,
we can give a more precise definition of stable solution.
Assuming \eqref{elliptic} and \eqref{reg:coeffs}, 
we consider the class of \emph{strong solutions} to~\eqref{pb:omega},
that is,
functions $u \in C^{0}(\overline{\Omega}) \cap W^{2,n}_{\rm loc}(\Omega)$ such that 
$- L u = f(u)$ a.e. in $\Omega$ and $u = 0$ on~$\partial \Omega$.
As commented above, a strong solution $u$ of 
\eqref{pb:omega} 
is \emph{stable} if the principal eigenvalue of the linearized equation at $u$ is nonnegative.
Equivalently (see~\cite{BerestyckiNirenbergVaradhan}),
the solution $u$ is stable if there exists a function $\varphi \in W^{2,n}_{\rm loc}(\Omega)$ such that
\begin{equation}
\label{stable:point}
\left\{
\begin{array}{cl}
\jacobi \varphi \leq 0 &\text{ a.e. in } \Omega,\\
\varphi > 0 &\text{ in } \Omega,
\end{array}
\right.
\end{equation}
where, recall, $\jacobi = L + f'(u)$ denotes the Jacobi operator (the linearization) at $u$.
We would like to point out
that the notion of stable solution refers only to the equation satisfied by $u$ and not to its boundary value.

Our energy estimate in $C^{1,1}$ domains will apply to strong stable solutions as above.
In a sense, 
these functions
are the natural replacement of the classical solutions 
for the Laplacian in $C^3$ domains
considered in~\cite{CabreFigalliRosSerra}.
Notice that, 
since $f(u) \in L^{\infty}(\Omega)$,
by $L^p$ estimates in $C^{1,1}$ domains (see~\cite{GilbargTrudinger}*{Theorem~9.13}),
strong solutions belong to $W^{2,p}(\Omega)$ for all $p < \infty$.
For further regularity properties, more assumptions on the coefficients and the domain are needed.
In fact, our a priori estimates in half-balls below require our solutions to have third weak derivatives,
but we will be able to remove this assumption by an approximation argument; see Remark~\ref{remark:approximation}.

We now state the main result of this work, an energy estimate up to the boundary in flat domains.
For $\rho > 0$, we denote 
the half-ball of radius $\rho$ centered at $0$ by 
\[
B_{\rho}^{+} := \{ x_n > 0 \} \cap B_{\rho}, 
\]
where $B_{\rho} = \{|x| < \rho\} \subset \R^n$ is the full-ball.
We also write 
\[
\partial^{0} B_{\rho}^{+} = \{x_n = 0\} \cap \partial B_{\rho}^+.
\]
for the lower boundary of $B_{\rho}^{+}$.
In the results below, $C = C(\ldots)$ denotes a constant $C$ depending only on the quantities appearing inside the parentheses.
We have the following:

\begin{theorem}
\label{thm:holder}
Let $L$ satisfy conditions~\eqref{elliptic},~\eqref{reg:coeffs}, and~\eqref{reg:b} in $\Omega = B_1^{+} \subset \R^n$.
Assume that $f \in C^1(\R)$ is nonnegative and nondecreasing.

Let $u \in W^{3,p}(B_1^{+})$, for some $p > n$, be a nonnegative stable solution to $- L u = f(u)$ in $B_1^{+}$, with $u = 0$ on $\partial^0 B_1^{+}$.

Then 
\[
\|\nabla u\|_{L^{2+ \gamma}(B_{1/2}^{+})} \leq C \|u\|_{L^1(B_1^{+})},
\]
where $\gamma = \gamma(n) > 0$ and 
$C = C(n, \elliptic, \bounded, \|\nabla a_{ij}\|_{L^{\infty}(B_{1}^{+})}, \|\vv_i \|_{L^{\infty}(B^{+}_1)})$.
\end{theorem}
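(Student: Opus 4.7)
The plan is to adapt the Cabré-Figalli-Ros-Oton-Serra approach of~\cite{CabreFigalliRosSerra} to the present setting of Lipschitz coefficients and the boundary of a flat half-ball. I split the argument into two main steps: first, a Hessian-type estimate obtained by testing the stability inequality against tangential derivatives of $u$ (essentially the content of Theorem~\ref{thm:sz}), and second, a Sobolev/Moser-type bootstrap to upgrade this to an $L^{2+\gamma}$ gradient bound.

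For the Hessian step, I would start from the BNV characterization: a positive $\varphi$ with $\jacobi\varphi \leq 0$ together with the logarithmic substitution $w = \log\varphi$ yields, via Cauchy-Schwarz, a usable quadratic form of stability,
\[
\int_{B_1^+} f'(u)\,\xi^2 \;\leq\; \int_{B_1^+} a_{ij}\,\partial_i\xi\,\partial_j\xi \;+\; (\text{lower-order terms in } \vv),
\]
for every $\xi$ vanishing on the curved part $\partial B_1^+ \setminus \partial^0 B_1^+$. Into this I would plug $\xi = \eta\,\partial_\alpha u$ for each tangential direction $\alpha = 1,\ldots,n-1$, where $\eta \in C_c^\infty(B_1)$. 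Because $u = 0$ on $\partial^0 B_1^+$, all tangential derivatives $\partial_\alpha u$ vanish there, so $\xi$ is an admissible test function. Differentiating $-Lu = f(u)$ tangentially gives the commutator identity $\jacobi(\partial_\alpha u) = (\partial_\alpha a_{ij})\,\partial_{ij}u + (\partial_\alpha \vv_i)\,\partial_i u$, and expanding stability around this produces an inequality relating $|D^2 u|^2$ to $|\nabla u|^2$ modulo commutator and cutoff terms.

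The main obstacle is controlling these commutator terms using only $\|\nabla a_{ij}\|_{L^\infty}$ and $\|\vv_i\|_{L^\infty}$, and never second derivatives of the coefficients. The $\partial_\alpha a_{ij}$ contribution bounds naively by $\|\nabla a_{ij}\|_{L^\infty}\int \eta^2\, |D^2 u|\,|\nabla u|$ and can be absorbed into the $|D^2 u|^2$ term via Cauchy-Schwarz with a small constant. The $\partial_\alpha \vv_i$ contribution is the delicate one; to avoid invoking $\nabla \vv$, I would pass to the divergence form~\eqref{op:nondiv}, where after a further integration by parts only the $L^\infty$ norm of $\bb_i = \vv_i - \partial_k a_{ki}$ appears and no derivatives of $\vv$ are needed. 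Summing over $\alpha = 1,\ldots,n-1$ would then control the tangential-tangential and tangential-normal Hessian entries, while the purely normal entry $\partial_{nn} u$ is recovered pointwise from the equation using uniform ellipticity and $u \in W^{2,p}$. The outcome is Theorem~\ref{thm:sz}, namely $\int_{B_{3/4}^+} |D^2 u|^2 \leq C\int_{B_1^+}(|\nabla u|^2 + u^2)$ with the required dependence of $C$.

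Finally, to pass from the Hessian bound to $L^{2+\gamma}$ control of $\nabla u$, I would apply the Sobolev embedding $W^{1,2}\hookrightarrow L^{2^*}$ to the tangential components of $\nabla u$ (using the Dirichlet condition $u = 0$ on $\partial^0 B_1^+$, equivalently extending by odd reflection across $\{x_n = 0\}$), and iterate on concentric half-balls in the spirit of~\cite{CabreFigalliRosSerra}; this yields a dimension-dependent $\gamma = \gamma(n) > 0$. The final reduction of the right-hand side from $\|\nabla u\|_{L^2}$ to $\|u\|_{L^1(B_1^+)}$ follows from standard bootstrap arguments for nonnegative stable solutions to semilinear equations with nondecreasing nonlinearities, combining Caccioppoli-type estimates, the stability inequality tested against radial cutoffs, and a Nedev-type trick to control $\int f(u)\,u$ by lower-order quantities.
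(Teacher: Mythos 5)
Your proposal goes wrong at the core of the Hessian step, in the choice of test function in the stability inequality, and this is not a fixable detail.

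Testing the integral stability inequality~\eqref{ineq:stable} with $\xi = \eta\,\partial_\alpha u$ yields essentially no information. Indeed, integrating $\partial_\alpha u \cdot \jacobi(\partial_\alpha u)\,\eta^2$ by parts produces the term $-\int \lvert\nabla(\partial_\alpha u)\rvert_A^2\,\eta^2 + \int f'(u)(\partial_\alpha u)^2\eta^2$ plus cutoff terms, and the stability inequality supplies exactly $\int f'(u)(\partial_\alpha u)^2\eta^2 \leq \int \lvert\nabla(\eta\,\partial_\alpha u)\rvert_A^2 + \ldots$ — the two Hessian-squared expressions cancel identically, leaving a trivial inequality involving only $\lvert\nabla u\rvert^2\lvert\nabla\eta\rvert^2$ and commutator errors. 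No Hessian bound remains on the left. The mechanism that actually produces a second-order quantity in the Sternberg--Zumbrun argument is the \emph{convexity} of the modulus: the paper tests with $\cc_\delta = \phi_\delta(\nabla u) - \NN\cdot\nabla u$, where $\phi_\delta$ is a convex $C^{1,1}$ regularization of $\lvert\cdot\rvert_{A(0)}$ and $\NN$ is the constant vector~\eqref{def:n:field}. The convexity step~\eqref{uta5} is what isolates the nonnegative quantity $\anew^2$ (the tangential part of the Hessian along level sets of $u$, cf.~\eqref{def:a}), and the second ingredient $\NN\cdot\nabla u$ is needed to make $\cc_\delta$ vanish on $\partial^0 B_1^+$ via~\eqref{bdy:dir}. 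Replacing $\phi_\delta(\nabla u)$ by a single directional derivative loses exactly the curvature term you need.

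Two further claims in the proposal are also wrong or missing. First, the bound you announce, $\int_{B_{3/4}^+}\lvert D^2 u\rvert^2 \leq C\int_{B_1^+}(\lvert\nabla u\rvert^2 + u^2)$, is an $L^2$ Hessian estimate; stability does \emph{not} give this universally in $f$, and Theorem~\ref{thm:sz} only yields the $L^1$ bounds $\|D^2 u\|_{L^1(B_{4/7}^+)} \leq C\|\nabla u\|_{L^2(B_1^+)}$ and $\||\nabla u| D^2 u\|_{L^1(B_{4/7}^+)} \leq C\|\nabla u\|_{L^2(B_1^+)}^2$, plus an $L^2$ bound only on the restricted quantity $\anew$. (The subsequent level-set/coarea argument in Step~1--2 of the paper's proof of Theorem~\ref{thm:holder} is tailored precisely to these $L^1$ Hessian bounds.) Second, the boundary terms produced by every integration by parts require a separate Pohozaev-type argument to control $\|\nabla u\|_{L^2(\partial^0 B_{2/3}^+)}$ by $\|\nabla u\|_{L^2(B_1^+)}$ (estimate~\eqref{ineq:pohozaev}), and this is where both the monotonicity of $f$ and the stability are genuinely used; your outline does not address these surface integrals at all, which the paper explicitly identifies as the new difficulty compared with the Laplacian in $C^3$ domains. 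Finally, the last reduction from $\|\nabla u\|_{L^2}$ to $\|u\|_{L^1}$ in the paper (Lemma~\ref{lemma:l2l1}) goes through the interpolation inequalities of Cabr\'e on cubes together with $\|\lvert\nabla u\rvert\,D^2 u\|_{L^1}$, not through a Nedev-type estimate on $\int f(u)\,u$.
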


\begin{remark}
\label{remark:regul}
Note that we are further assuming $u \in W^{3,p}(B_1^{+})$ with $p > n$. 
In particular, by Sobolev embedding, 
$u$ is in $C^2(\overline{B_1^+})$
and the solution is classical.\footnote{For the embedding in half-balls, just apply the usual Sobolev embedding in the full ball to a third order reflection of $u$, for instance, letting \[ \textstyle u(x', x_n) = -10 u(x',-x_n) + 160 u(x',-\frac{x_n}{2}) - 405 u(x',-\frac{x_n}{3}) + 256 u(x',-\frac{x_n}{4}) \] for $x_n < 0$ and $x' \in \R^{n-1}$, which is in $W^{3,p}(B_1) \subset C^2(\overline{B_1})$.}
By approximation, the estimate will also hold for strong solutions (see Theorem~\ref{thm:c11} and Remark~\ref{remark:approximation} below).
We need third weak derivatives in order to have a cancellation which removes the nonlinearity in the stability condition.
This step is crucial for our bounds to be independent of $f$.
\end{remark}

\begin{remark}
\label{remark:cont}
The continuity $\vv_i \in C^0(\overline{B_1^+})$ up to the boundary (assumption \eqref{reg:b} above) will allow us to control these coefficients on certain surface integrals over $\partial^0 B_1^+$ arising in the proof.
Assuming only $\vv_i \in L^{\infty}(B_1^+)$ does not suffice for such estimates on surfaces.
\end{remark}

To prove Theorem~\ref{thm:holder}, 
the stability condition \eqref{stable:point}
will come into play
 through a useful integral inequality 
that has already appeared in our previous work~\cite{ErnetaInterior}.
Since the coefficient matrix $A(x) = (a_{ij}(x))$ is positive definite, 
it gives rise to a norm
\[ |p|_{A(x)} := \left( a_{ij}(x) p_i p_j\right)^{1/2} \quad \text{ for } p \in \R^n. \]
In~\cite{ErnetaInterior}, we showed that if $u$ is stable, 
then 
\begin{equation}
\label{ineq:stable}
\int_{\Omega} f'(u) \xi^2 \d x 
\leq \int_{\Omega} 
\left|\nabla \xi-{\textstyle\frac{1}{2}} \xi A^{-1}(x) \bb(x)\right|^2_{A(x)}
\d x 
\quad \text{ for all } \xi \in C^\infty_c(\Omega),
\end{equation}
where $\bb(x)$ is the vector field introduced in \eqref{def:bvector} above.
Essentially, \eqref{ineq:stable} 
follows from the inequality in \eqref{stable:point} multiplying by $\xi^2/ \varphi$, integrating by parts, and completing squares.
We often refer to \eqref{ineq:stable} as the ``integral stability inequality'' to distinguish it from the pointwise condition \eqref{stable:point} above.
Moreover, we would like to point out that the inequality~\eqref{ineq:stable} is not equivalent to our stability condition~\eqref{stable:point} in general; see~\cite{ErnetaInterior}.

A fundamental ingredient in the proof of Theorem~\ref{thm:holder} 
will be to control the Hessian of a stable solution in half-balls.
The following boundary Hessian estimates 
can be interpreted as a generalization of a geometric stability condition due to Sternberg and Zumbrun~\cite{SternbergZumbrun1}.
Throughout the paper,
a constant depending only on $n$, $\elliptic$, and $\bounded$ will be called \emph{universal}.

\begin{theorem}
\label{thm:sz}
Let $u \in W^{3,p}(B_1^{+})$, for some $p > n$,
be a nonnegative stable solution of 
$- L u = f(u)$ in $B_{1}^{+}$, with $u = 0$ on $\partial^0 B_{1}^{+}$.
Assume that $f \in C^1(\R)$ is nonnegative.
Assume that $L$ satisfies conditions~\eqref{elliptic},~\eqref{reg:coeffs}, and~\eqref{reg:b} in $\Omega = B_1^{+}$, and that
\[
\|D A\|_{L^{\infty}(B_{1}^{+})} + \|\vv\|_{L^{\infty}(B_{1}^{+})} \leq \varepsilon
\]
for some $\varepsilon > 0$.

Then
\begin{equation}
\label{eq:szz}
\begin{split}
\int_{B_1^{+}}  \anew^2 \eta^2 \d x 
&\leq C \int_{B_{1}^{+}} |\nabla u|^2 \left( |\nabla \eta|^2 + |D^2 (\eta^2)|+ \varepsilon |\nabla (\eta^2)| + \varepsilon^2 \eta^2\right) \d x\\
&\quad \quad \quad + C\int_{B_1^{+}} |D^2 u | |\nabla u| \big(|\nabla(\eta^2)| + \varepsilon \eta^2\big) \d x \\
& \quad \quad \quad \quad \quad + C \int_{\partial^0 B_1^{+}} |\nabla u|^2 (|\nabla (\eta^2)|  +\varepsilon\eta^2) \d \hcal^{n-1}
\end{split}
\end{equation}
for all $\eta \in C^{\infty}_c(B_1)$,
where $C$ is a universal constant and
\begin{equation}
\label{def:a}
\anew := 
\left\{
\begin{array}{ll}
\Big(\tr(A(x) D^2 u A(0) D^2 u) -|\nabla u|_{A(0)}^{-2}  |D^2 u A(0) \nabla u|^2_{A(x)} \Big)^{1/2} & \text{ if } \nabla u \neq 0 \\
0 & \text{ if } \nabla u = 0.
\end{array}
\right.
\end{equation}

Assume moreover that $f$ is nondecreasing and $\varepsilon \leq \varepsilon_0$.
Then
\begin{equation}
\label{ineq:pohozaev}
\|\nabla u\|_{L^2(\partial^0 B_{2/3}^{+})} \leq C \|\nabla u\|_{L^2(B_1^{+})},
\end{equation}
\begin{equation}
\label{ineq:hessgrad}
\||\nabla u| D^2 u\|_{L^1(B_{4/7}^{+})} \leq C \|\nabla u\|_{L^2(B_1^{+})}^2,
\end{equation}
\begin{equation}
\label{ineq:szkinda}
\| \anew \|_{L^2(B_{1/2}^{+})} \leq C \|\nabla u\|_{L^2(B_1^{+})},
\end{equation}
and
\begin{equation}
\label{ineq:hessian}
\| D^2 u \|_{L^1(B_{4/7}^{+})} \leq C \|\nabla u\|_{L^2(B_1^{+})},
\end{equation}
where $\varepsilon_0 > 0$ and $C$ are universal constants.
\end{theorem}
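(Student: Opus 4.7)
The inequality \eqref{eq:szz} is the heart of the theorem; the remaining bounds will all be deduced from it by choosing appropriate cutoffs together with the Pohozaev estimate \eqref{ineq:pohozaev}. For \eqref{eq:szz} I would proceed along the lines of our interior work \cite{ErnetaInterior}, but carefully tracking the surface contributions on $\partial^0 B_1^+$. The starting point is the integral stability inequality \eqref{ineq:stable} applied to the test function $\xi = c \, \eta$, where $c := |\nabla u|_{A(0)}$ is the gradient norm in the coefficient matrix \emph{frozen} at the origin. The use of $A(0)$ rather than $A(x)$ is essential: it ensures that $\nabla c$ contains no derivatives of the coefficients, and it is the source of the asymmetric expression for $\anew$ in \eqref{def:a}. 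Setting $V := A(0)\nabla u$, one differentiates $- L u = f(u)$ in each coordinate direction, multiplies by $V_k$, and sums to obtain a pointwise identity of the form
\[
f'(u)\, c^2 = -(L V_k)\, V_k + \mathcal{E}, \qquad |\mathcal{E}| \leq C\varepsilon \, \big(|\nabla u|^2 + |\nabla u|\, |D^2 u|\big),
\]
where $\mathcal{E}$ collects the terms arising from derivatives of $a_{ij}$ and $\vv_i$. Inserting this identity into \eqref{ineq:stable} and integrating by parts on the right, the quadratic terms in $D^2 u$ reorganize precisely into $\anew^2$ as in \eqref{def:a}, while $\mathcal{E}$ produces the $\varepsilon$-dependent integrals. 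Because $\eta$ is only compactly supported in $B_1$ (and not in $B_1^+$), the integrations by parts also generate surface integrals on $\partial^0 B_1^+$. Since $u = 0$ on $\partial^0$, all tangential derivatives of $u$ vanish there, $|\nabla u| = |u_n|$, and these surface terms reduce to expressions matching the last integral in \eqref{eq:szz}. The continuity $\vv_i \in C^0(\overline{B_1^+})$ from Remark~\ref{remark:cont} is used precisely here to give pointwise meaning to $\vv_i$ on $\partial^0 B_1^+$.

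For the Pohozaev estimate \eqref{ineq:pohozaev} I would multiply $-L u = f(u)$ by $(x \cdot \nabla u) \, \zeta(x)$, with $\zeta \in C^\infty_c(B_1)$ a radial cutoff equal to $1$ on $B_{2/3}$, and integrate over $B_1^+$. The principal part produces, after integration by parts, a surface integral on $\partial^0 B_{2/3}^+$; since $\nabla u = u_n \, e_n$ there, this reduces to a multiple of $\int_{\partial^0 B_{2/3}^+} x_n \, a_{nn}(x) \, u_n^2$ of definite sign, giving the desired control of $\int |\nabla u|^2$. The $f$-term is handled using $F(u) := \int_0^u f(s)\, ds \geq 0$ (nonnegative since $f \geq 0$ and $u \geq 0$), contributing a term of good sign. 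The coefficient perturbations contribute errors of order $\varepsilon$, absorbed provided $\varepsilon \leq \varepsilon_0$ is small.

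To deduce \eqref{ineq:hessgrad}--\eqref{ineq:hessian} I would insert into \eqref{eq:szz} cutoffs $\eta \in C^\infty_c(B_1)$ supported in $B_{2/3}^+$ and equal to $1$ on the relevant half-balls. The surface term on the right of \eqref{eq:szz} is then controlled by \eqref{ineq:pohozaev}. The mixed term $\int |D^2 u|\, |\nabla u|\, (|\nabla \eta^2| + \varepsilon \eta^2)$ is split via Cauchy-Schwarz, playing $\anew$ against $|\nabla u|$ and absorbing half of $\int \anew^2 \eta^2$ back onto the left; this yields \eqref{ineq:szkinda} and, along the way, \eqref{ineq:hessgrad}. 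Finally, \eqref{ineq:hessian} follows from the pointwise decomposition
\[
|D^2 u|^2 \leq C \big( \anew^2 + \big|\nabla |\nabla u|_{A(0)}\big|^2 \big) \quad \text{on } \{\nabla u \neq 0\},
\]
together with the Stampacchia identity $D^2 u = 0$ a.e.\ on $\{\nabla u = 0\}$ (applied to $\nabla u$). The $\anew$-part contributes $\|\anew\|_{L^1} \leq C\|\anew\|_{L^2} \leq C\|\nabla u\|_{L^2}$ via Cauchy-Schwarz and \eqref{ineq:szkinda}, while the $|\nabla |\nabla u||$-part is controlled by $|\nabla u|\, |\nabla|\nabla u|| \leq |D^2 u\cdot \nabla u|$ together with \eqref{ineq:hessgrad}, after a level-set decomposition that handles the region where $|\nabla u|$ is small.

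The main obstacle I foresee is the careful tracking of the surface terms. In the interior setting \cite{ErnetaInterior} all boundary contributions vanish by compact support, so the scheme closes cleanly; on the half-ball, several surface integrals on $\partial^0 B_1^+$ arise from successive integrations by parts and must be organized into the clean expression in the last line of \eqref{eq:szz}. The Pohozaev estimate \eqref{ineq:pohozaev} is then the only means to control gradients on $\partial^0$, so the whole scheme rests on the mutual consistency of \eqref{eq:szz} and \eqref{ineq:pohozaev}. A secondary technical point is the last step for \eqref{ineq:hessian}, where one cannot directly combine the $L^2$ bound on $\anew$ with the weighted $L^1$ bound \eqref{ineq:hessgrad} pointwise, and a level-set decomposition of $|\nabla u|$ is required to close the estimate.
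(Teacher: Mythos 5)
Your proposal for \eqref{eq:szz} has a genuine gap at the very first step: the test function $\xi = c\,\eta$ with $c := |\nabla u|_{A(0)}$ is \emph{not admissible} in the integral stability inequality \eqref{ineq:stable}. That inequality is proved for $\xi \in C^\infty_c(\Omega)$ with $\Omega = B_1^+$, so the test function must vanish on $\partial^0 B_1^+$. But $c = |\nabla u|_{A(0)}$ is strictly positive there (by Hopf's lemma, since $u \geq 0$ and $Lu \leq 0$), and $\eta$ is only compactly supported in the full ball $B_1$, not in $B_1^+$. The issue is not that an integration by parts will create surface terms to track — it is that the inequality one starts from is simply not available for this $\xi$. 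This is precisely the difficulty the paper solves with its choice
\[
\cc := |\nabla u|_{A(0)} - \NN\cdot\nabla u, \qquad \NN = |e_n|_{A(0)}^{-1} A(0)e_n,
\]
for which the identity $|\nabla u|_{A(0)} = \NN\cdot\nabla u$ on $\partial^0 B_1^+$ (valid for nonnegative $u$ vanishing on the flat boundary) guarantees $\cc = 0$ there, making $\cc\,\eta$ an admissible test function. This introduction of an additional directional derivative $\NN\cdot\nabla u$ is the main new idea in the boundary version of the argument; any blind adaptation of the interior scheme with $c = |\nabla u|_{A(0)}$ alone cannot work.

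There is a second gap in your treatment of the Pohozaev estimate \eqref{ineq:pohozaev}. Using $F(u)\geq 0$ alone is not enough: one must \emph{estimate} the $F$-term, not merely have it of good sign, because the boundary in question is the flat part $\partial^0 B_1^+$ and the multiplier $\NN\cdot\nabla u$ (or $x\cdot\nabla u$) does not make the $F$-term disappear. The paper uses the monotonicity of $f$ through the inequality $F(u)\leq u\,f(u) = -u\,Lu$, then integrates by parts again; this is where the assumption that $f$ be nondecreasing enters. Moreover, the coefficient errors in the Pohozaev identity contain $\varepsilon\int |x||D^2u||\nabla u|$, and to control that Hessian error you need \eqref{ineq:hessgrad} — which in turn needs \eqref{ineq:pohozaev}. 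Your sketch does not address this circularity; the paper breaks it through a Simon-type absorption lemma (Appendix~\ref{app:simon}) applied after rescaling, choosing a small universal $\delta$ so that the boundary energy on a smaller ball is controlled by a $\delta$-fraction of itself on a larger ball plus the interior Dirichlet energy. Without this device the estimate does not close.
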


%

To prove the first bound~\eqref{eq:szz} in Theorem~\ref{thm:sz},
we will exploit the integral stability inequality~\eqref{ineq:stable} by choosing appropriate test functions.
Letting $\xi = \cc \eta$ in~\eqref{ineq:stable} with $\Omega = B_1^+$,
where $\cc$, $\eta$ are smooth functions satisfying $\cc = 0$ on $\partial^{0} B_{1}^{+}$ and $\supp \eta \subset B_1$,
if we integrate by parts, then \eqref{ineq:stable} becomes
\begin{equation}
\label{stab:half:jac}
\int_{B_{1}^{+}}  \cc \jacobi \cc \, \eta^2\d x 
\leq \int_{B_{1}^{+}} 
\cc^2 \left|\nabla \eta-{\textstyle\frac{1}{2}} \eta A^{-1}(x) b(x)\right|^2_{A(x)}
\d x.
\end{equation}

In order to obtain universal estimates, the crucial point will be to choose $\cc$ in such a way that the 
Jacobi operator
$\jacobi \cc$ 
in the left-hand side of \eqref{stab:half:jac}
becomes independent of the nonlinearity.
Thus, in the proof of \eqref{eq:szz},
our choice will be a smooth approximation of
\[
\cc(x) = |\nabla u(x)|_{A(0)} - \NN \cdot \nabla u (x)
\]
for an appropriate constant vector field $\NN \colon \R^n_+ \to \R^n$ 
(given by~\eqref{def:n:field} in Section~\ref{section:hessian} below).
Here, we need $f \geq 0$ to make sure that such an approximation of $\cc$ vanishes on $\partial^0 B_1^+$,
but otherwise is a technical assumption in this step.

Under a smallness assumption on the coefficients ($\varepsilon \leq \varepsilon_0$),
the function $\anew$ in~\eqref{def:a} controls part of the Hessian of $u$ (as explained in~\cite{ErnetaInterior} or in Section~\ref{section:hessian} below).
We can further bound the full Hessian by assuming that the equation has a sign $- L u = f(u) \geq 0$.
For the the final form of the Hessian estimates in~\eqref{ineq:hessgrad},~\eqref{ineq:szkinda}, and~\eqref{ineq:hessian},
we need to control the 
third term
in the right-hand side of~\eqref{eq:szz},
which is a surface integral and arises at every integration by parts.
To control such an integral requires both the monotonicity of $f$
and the stability of $u$,
while the previous works~\cite{CabreFigalliRosSerra, CabreQuant} only needed the condition on $f$.
The reason for this is an additional Hessian error which does not appear 
for the Laplacian on $C^3$ domains when trying to control the boundary integral.

Once Theorem~\ref{thm:sz} is available, 
our main result, Theorem~\ref{thm:holder}, 
will follow directly by the ideas of~\cite{CabreFigalliRosSerra, CabreRadial} combined with a scaling and covering argument.

To conclude this section, we state our energy estimate in general domains of $C^{1,1}$ class.
Approximating \eqref{pb:omega} by 
stable solutions to
smoother problems 
(as explained next in Remark~\ref{remark:approximation}),
flattening the boundary,
the result will follow easily from Theorem~\ref{thm:holder}
and 
by the interior estimates obtained in our previous work~\cite{ErnetaInterior}.
This argument requires the convexity of $f$ to ensure that the approximating sequence of stable solutions converges to the original one.
The same procedure can be used to obtain H\"{o}lder estimates up to the boundary in $C^{1,1}$ domains, 
which has been carried out in our forthcoming work~\cite{ErnetaBdy2}.
Since the ideas in both papers are very similar,
we defer the complete proof of Theorem~\ref{thm:c11} below to~\cite{ErnetaBdy2},
where we implement the approximation and flattening argument in full detail.
Here, we just give indications in Remark~\ref{remark:approximation}, after the theorem.

\begin{theorem}[\cite{ErnetaBdy2}]
\label{thm:c11}
Let $\Omega \subset \R^n$ be a bounded domain of class $C^{1,1}$
and let $L$ satisfy conditions~\eqref{elliptic} and~\eqref{reg:coeffs} in $\Omega$.
Assume that $f \in C^1(\R)$ is nonnegative, nondecreasing, and convex.

Let $u \in C^{0}(\overline{\Omega}) \cap W^{2,n}_{\rm loc}(\Omega)$ be a nonnegative stable solution of $- L u = f(u)$ in $\Omega$, with $u = 0$ on $\partial \Omega$.

Then
\[
\|\nabla u\|_{L^{2+\gamma}(\Omega)} \leq C \|u\|_{L^1(\Omega)},
\]
where $\gamma = \gamma(n) > 0$ and 
$C = C(\Omega, n, \elliptic, \bounded, \| \nabla a_{ij}\|_{L^{\infty}(\Omega)}, \|b_i\|_{L^{\infty}(\Omega)})$.
\end{theorem}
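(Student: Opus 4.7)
The plan is to reduce Theorem~\ref{thm:c11} to the local flat-boundary estimate of Theorem~\ref{thm:holder}, combine it with the interior energy bound of~\cite{ErnetaInterior}, cover $\overline{\Omega}$ by finitely many (half-)balls, and close the argument via an approximation that uses the convexity of $f$ in an essential way.

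First, I would regularize the problem. Starting from the strong stable solution $u$, I would approximate the coefficients $a_{ij}, \vv_i$ and the nonlinearity $f$ by smooth data $a_{ij}^{(m)}, \vv_i^{(m)}, f_m$ preserving uniform ellipticity, nonnegativity, monotonicity and convexity, and approximate $\Omega$ from inside by smooth domains $\Omega_m \nearrow \Omega$. For each $m$, I take the minimal classical stable solution $u_m$ of $-L_m u_m = f_m(u_m)$ in $\Omega_m$ with $u_m = 0$ on $\partial \Omega_m$, obtained via the standard monotone-iteration scheme used to construct extremal solutions. By smoothness of the data, elliptic bootstrap yields $u_m \in W^{3,p}(\Omega_m)$ for every $p < \infty$. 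Convexity of $f$ is exactly what underlies the classical argument (see Remark~\ref{remark:approximation} and the references in the introduction) ensuring that $u_m \to u$ in $L^1(\Omega)$ and that the stability condition~\eqref{stable:point} is preserved in the limit.

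Next, for each $x_0 \in \partial \Omega_m$ there exist a radius $r>0$ and a diffeomorphism $\Phi \colon B_r(x_0)\cap \overline{\Omega_m} \to \overline{B_1^+}$ sending $B_r(x_0)\cap\partial\Omega_m$ onto $\partial^0 B_1^+$; by the uniform $C^{1,1}$ geometry of $\Omega_m$ one can choose $\Phi$ with $r$ and $\|\Phi\|_{C^{1,1}}$ controlled independently of $m$. Setting $\tilde u := u_m \circ \Phi^{-1}$, a direct computation shows that $\tilde u$ is a nonnegative stable $W^{3,p}$ solution of $-\tilde L \tilde u = f_m(\tilde u)$ in $B_1^+$ with $\tilde u = 0$ on $\partial^0 B_1^+$, where
\[
\tilde a_{ij}(y) = \bigl(a_{k\ell}^{(m)}\,\partial_k \Phi_i\,\partial_\ell \Phi_j\bigr)\circ \Phi^{-1}(y),
\qquad
\tilde \vv_i(y) = \bigl(\vv_k^{(m)}\,\partial_k \Phi_i + a_{k\ell}^{(m)}\,\partial_{k\ell}\Phi_i\bigr)\circ \Phi^{-1}(y).
\]
These satisfy~\eqref{elliptic},~\eqref{reg:coeffs},~\eqref{reg:b} with bounds independent of $m$ (the Hessian of $\Phi$ being $L^\infty$ by $C^{1,1}$). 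After an affine rescaling to the unit half-ball, Theorem~\ref{thm:holder} gives
\[
\|\nabla u_m\|_{L^{2+\gamma}(V_{x_0})}\le C\,\|u_m\|_{L^1(\Omega)}
\]
on a neighborhood $V_{x_0}$ of $x_0$ in $\overline{\Omega_m}$. For interior points $x_0 \in \Omega_m$ at distance at least $\delta$ from $\partial\Omega_m$, the interior bound of~\cite{ErnetaInterior} yields the same estimate on $B_{\delta/2}(x_0)$. Finitely many such neighborhoods cover $\overline{\Omega_m}$, and summation produces $\|\nabla u_m\|_{L^{2+\gamma}(\Omega)} \le C\,\|u_m\|_{L^1(\Omega)}$ uniformly in $m$. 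Passing to the limit by $L^1$-convergence $u_m \to u$ and weak compactness in $W^{1,2+\gamma}$ then concludes the proof.

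The main obstacle, deferred to~\cite{ErnetaBdy2}, is the approximation step: one must simultaneously smooth the coefficients, the domain, and the nonlinearity while ensuring that the approximating solutions remain stable and converge in $L^1$ to $u$. Convexity of $f$ is indispensable here, as it underlies the construction of the minimal-solution branch, the preservation of stability in the monotone limit, and the identification of the limit with $u$; this is ultimately why convexity is assumed in Theorem~\ref{thm:c11} despite not being needed in the local Theorem~\ref{thm:holder}. Once the approximation is in place, the flattening and covering are routine given the explicit coefficient dependence produced by Theorem~\ref{thm:holder}.
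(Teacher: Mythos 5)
Your proposal follows essentially the same route the paper sketches in Remark~\ref{remark:approximation} and defers in full to~\cite{ErnetaBdy2}: exhaust $\Omega$ by smooth domains, use $u$ as a barrier in a monotone-iteration scheme to produce stable $W^{3,p}$ solutions of regularized problems, flatten the boundary and apply Theorem~\ref{thm:holder} together with the interior bound from~\cite{ErnetaInterior}, cover and sum, and finally invoke the convexity of $f$ (uniqueness of stable solutions with convex nonlinearities) to identify the limit with $u$. The transformed-coefficient formulas and uniformity-in-$m$ observations you record are correct, and the places where you defer the technical details of the approximation match where the paper itself postpones them.
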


\begin{remark}
\label{remark:approximation}
As mentioned above, our energy estimate in $C^{1,1}$ domains 
will follow from 
Theorem~\ref{thm:holder}
but, unlike this result,
it does not require third derivatives of the solution
or assumption~\eqref{reg:b} (the continuity of $\vv_i$ up to the boundary).
To achieve this, we consider an exhaustion of $\Omega$ by smooth sets $\Omega_k$.
Using $u$ as a barrier, by monotone iteration,
 we construct strong stable solutions $u_k$ to a semilinear equation 
$-L_k u_k = f_k(u_k)$ in $\Omega_k$ with smoother coefficients.

Flattening the boundary $\partial \Omega_k$, we obtain solutions in the half-ball, 
where we would like to apply Theorem~\ref{thm:holder}.
For this, we need to ensure the existence of third weak derivatives in $L^p$ for these solutions,
which is guaranteed if the new coefficients $(\vv^{k}_i)_{k}$ are sufficiently regular.\footnote{For instance, suppose that $a_{ij} \in C^{0,1}(\overline{B_1^+})$ and $\vv_i \in W^{1,p}(B_1^+)$ for some $p > n$, and let $u \in W^{2,p}(B_1^+)$ be a strong solution to $-L u = f(u)$ in $B_1^+$, $u = 0$ on $\partial^0 B_1^+$. Since $f(u) \in L^{\infty}(B_1^+)$, by Calder\'{o}n-Zygmund estimates (see~\cite{GilbargTrudinger}*{Theorem~9.13}) we have $u \in W_{\rm loc}^{2, q}(B_1^{+} \cup \partial^0 B_1^{+})$ for all $q <\infty$. Formally taking tangential derivatives, for $k = 1, \ldots, n-1$ we obtain $- L u_k = f'(u) u_k + \partial_k a_{ij}(x) u_{ij} + \partial_k \vv_i(x) u_i \in L^p_{\rm loc}(B_1^{+} \cup \partial^0 B_1^{+})$ and $u_k = 0$ on $\partial^0 B_1^+$, hence, again by $L^p$ estimates, we deduce $u \in W^{3, p}_{\rm loc}(B_1^{+} \cup \partial^0 B_1^{+})$. It remains to check that the weak derivative $u_{nnn}$ exists and lies in $L^{p}_{\rm loc}(B_1^{+} \cup \partial^0 B_1^{+})$, but this follows easily from the equation.}
The interior continuity of $\vv_i$ (assumption~\eqref{reg:coeffs} above) will make sure that $\vv_i^{k} \to \vv_i$ locally uniformly in $\Omega$,
which is essential to show that $u$ is a barrier.

Finally, we need the convexity of $f$ for $u_k$ to converge to the original solution $u$ and not to some other limit.
The deeper reason behind this is that stable solutions with convex nonlinearities 
are unique; see~\cite{Dupaigne, ErnetaBdy2}.
For $C^3$ domains and smooth coefficients, we do not need the approximation procedure
and we could give the analogue of Theorem~\ref{thm:c11} without the convexity assumption on $f$.
\end{remark}

\subsection{Outline of the article}
Section~\ref{section:hessian} is devoted to the proof of Theorem~\ref{thm:sz} containing the Hessian estimates up to the boundary.
In Section~\ref{section:higher:int} we prove 
Theorem~\ref{thm:holder},
the energy estimate in half-balls.

In Appendix~\ref{app:interpolation} we recall some useful interpolation inequalities of Cabr\'{e}~\cite{CabreRadial}.
Finally, in Appendix~\ref{app:simon} we recall Simon's lemma~\cite{Simon} for absorbing errors in larger balls.

\section{Boundary Hessian estimates}
\label{section:hessian}

Recall the function
$\anew \colon \overline{B}_1 \to \R$
introduced in~\eqref{def:a} in the statement of Theorem~\ref{thm:sz}.
It can also be written as
\begin{equation}
\label{def:anew}
\anew =
\Big( \|A^{1/2}(x)D^2 u A^{1/2}(0)\|_{\rm HS}^2 
- 
|A^{1/2}(x) D^2 u A^{1/2}(0) \nn(x)|^2
\Big)^{1/2} \quad  \text{in } \{\nabla u \neq 0\},
\end{equation}
where $\|\cdot \|_{\rm HS}$ denotes the Euclidean Hilbert-Schmidt norm for matrices
and $\nn(x)$ is the unit vector field
$\nn \colon (B_1^{+} \cup \partial^0 B_1^{+}) \cap \{ \nabla u \neq 0\} \to \R$
given by
\begin{equation}
\label{n:vector}
\nn(x) := |\nabla u|^{-1}_{A(0)} A^{1/2}(0) \nabla u(x).
\end{equation}
Here we are following the notation for the Hessian estimates in~\cite{ErnetaInterior}.

First we prove the bound~\eqref{eq:szz} for $\anew$ in Theorem~\ref{thm:sz}.
This is an analogue of the Sternberg-Zumbrun geometric estimate up to the boundary.
For this, it is convenient to define
the constant vector field
\begin{equation}
\label{def:n:field}
\NN := |e_n|_{A(0)}^{-1}A(0)e_n,
\quad
 \quad  
\NN_i := (a_{nn}(0))^{-1/2} a_{in}(0).
\end{equation}
Notice that $\NN$ has unit norm with respect to the scalar product defined by the inverse matrix $A^{-1}(0)$,
i.e.,
 $|\NN|_{A^{-1}(0)} = 1$.
 Moreover, since $u$ is nonnegative and 
$u = 0$ on $\partial^{0} B_1^{+}$, we have the identity
\begin{equation}
\label{bdy:dir}
|\nabla u|_{A(0)} = \NN \cdot \nabla u \quad \text{ on } \partial^0 B_1^{+}.
\end{equation}
The vector field $\NN$ will also be useful later when controlling the Dirichlet energy on the boundary.

%

\begin{proof}[Proof of \eqref{eq:szz} in Theorem~\ref{thm:sz}]
We test the stability inequality \eqref{stab:half:jac} with a variant of
\[
\cc := | \nabla u|_{A(0)} - \NN \cdot \nabla u,
\]
where $\NN = (\NN_i)$ is the constant vector field defined in \eqref{def:n:field} above.
Since $|\nabla u|_{A(0)}$ is not necessarily smooth when $\nabla u = 0$, 
following~\cite{CabreFigalliRosSerra},
we take a convex $C^{1,1}$ regularization of the modulus $| \cdot |_{A(0)}$ instead.
For each small $\delta > 0$,
we define
\begin{equation}
\label{def:convex:mod}
\phi_{\delta}(z) := |z|_{A(0)} \indicator_{\{|z|_{A(0)} > \delta\}} + \left(\frac{\delta}{2} + \frac{|z|_{A(0)}^2}{2\delta}\right) \indicator_{\{ |z|_{A(0)} < \delta \}}.
\end{equation}

Given that $u$ is nonnegative and 
superharmonic (in the sense that $L u = - f(u) \leq 0$),
unless $u \equiv 0$ (in which case there is nothing to prove),
by the Hopf lemma and uniform ellipticity we have
 $|\nabla u|_{A(0)} \geq c > 0$ on 
 $\partial^0 B_1^{+} \cap \supp \eta$, for some constant $c$.
Hence, for $\delta > 0$ sufficiently small we have
\begin{equation}
\label{choice:delta}
\phi_{\delta}(\nabla u) = |\nabla u|_{A(0)} \quad \text{ in a neighborhood of } \partial^0 B_{1}^{+} \cap \supp \eta \text{ inside } \overline{B_{1}^{+}}.
\end{equation}
Choosing $\delta > 0$ small enough such that \eqref{choice:delta} holds, we let
\[
\cc_{\delta} := \phi_{\delta}(\nabla u) - \NN \cdot \nabla u.
\]
Since $\cc_{\delta}$ vanishes on $\partial^0 B_{1}^{+}$, 
this is a valid test function in the stability inequality \eqref{stab:half:jac}.

We can write the Jacobi operator acting on $\cc_{\delta}$ as the sum of three terms
\[
\begin{split}
\cc_{\delta} \jacobi \cc_{\delta} 
&= \cc_{\delta}(L \cc_{\delta} + f'(u)\cc_{\delta})\\
&= \phi_{\delta}(\nabla u)\jacobi \left[\phi_{\delta}(\nabla u) \right]
- \cc_{\delta} \jacobi \left[\NN \cdot \nabla u\right] 
-(\NN \cdot \nabla u) \jacobi\left[ \phi_{\delta}(\nabla u)\right].
\end{split}
\]
Multiplying this identity by $\eta^2$ and integrating in $B_1^{+}$ yields the left-hand side of \eqref{stab:half:jac},~i.e.,
\begin{equation}\label{bdy:curv:1}
\begin{split}
\int_{B_1^{+}}  \cc_{\delta} \jacobi \, \cc_{\delta} \eta^2\d x 
&=  \int_{B_1^{+}}  \phi_{\delta}(\nabla u)\jacobi \left[\phi_{\delta}(\nabla u) \right] \eta^2\d x  - \int_{B_1^{+}}  \cc_{\delta} \jacobi \left[\NN \cdot \nabla u\right] \eta^2 \d x \\
& \quad \quad \quad  - \int_{B_1^{+}}  (\NN \cdot \nabla u) \jacobi\left[ \phi_{\delta}(\nabla u)\right] \eta^2 \d x .
\end{split}
\end{equation}
We now study each of the three terms in \eqref{bdy:curv:1} separately.

\vspace{3mm}\noindent
\textbf{First term}. \textit{ We prove that}
\begin{equation}
\label{term1}
\begin{split}
&\int_{B_1^{+}}\phi_{\delta}(\nabla u)  \jacobi \left[\phi_{\delta}(\nabla u) \right]  \eta^2 \d x\\
& \quad \quad \geq \int_{B_1^{+}}\anew^2 \,  \indicator_{\{|\nabla u|_{A(0)} > \delta\}} \, \eta^2\d x
- C \delta \int_{B_{1}^{+}} |f'(u)| ( |\nabla u| + \delta) \eta^2 \d x\\
& \quad \quad \quad \quad  -C \varepsilon \int_{B_1^{+}} \left( |\nabla u| + \delta \right) \left( |D^2 u |  \eta^2 + |\nabla u| |\nabla (\eta^2)| \right) \d x 
- C \varepsilon \int_{\partial^{0} B_1^{+}}|\nabla u|^2 \eta^2 \d \hcal^{n-1}.
\end{split}
\end{equation}

Recalling that $\jacobi = L + f'(u)$, we start by computing $L[\phi_{\delta}(\nabla u)]$ first.
Here, since $u \in W^{3, p}(B_1^+)$ with $p > n$, by Sobolev embedding $u \in C^2(\overline{B_1^+})$ and, moreover, the Hessian $D^2 u$ is differentiable a.e. in $B_1^+$ (for instance, see~\cite{EvansGariepy}).
Thus we have
\begin{equation}
\label{uta4}
\begin{split}
L \left[ \phi_{\delta}(\nabla u) \right] 
&= a_{ij}(x) \partial_{ij}^2\left[ \phi_{\delta}(\nabla u) \right] + \vv_{i}(x) \partial_{i}\left[ \phi_{\delta}(\nabla u) \right]\\
&= a_{ij}(x) \partial_{z_k}\phi_{\delta}(\nabla u) u_{ijk} 
+a_{ij}(x) \partial^2_{z_k z_l}\phi_{\delta}(\nabla u) u_{jk} u_{il}+ \vv_{i}(x) \partial_{z_k} \phi_{\delta}(\nabla u) u_{ik}
\end{split}
\end{equation}
a.e.~in~$B_1^{+}$.
By the convexity of $\phi_{\delta}$ and its definition~\eqref{def:convex:mod},
it is easy to check that
\begin{equation}
\label{uta5}
\phi_{\delta}(\nabla u) a_{ij}(x) \partial^2_{z_k z_l}\phi_{\delta}(\nabla u) u_{jk} u_{il} \geq \anew^2 \,  \indicator_{\{|\nabla u|_{A(0)} > \delta\}}.
\end{equation}
Therefore, multiplying \eqref{uta4} by $\phi_{\delta}(\nabla u) \eta^2$, using \eqref{uta5}, and integrating we obtain
\begin{equation}
\label{uta6}
\begin{split}
& \int_{B_1^{+}}\phi_{\delta}(\nabla u)  L \left[\phi_{\delta}(\nabla u) \right]  \eta^2 \d x \\
& \geq \int_{B_1^{+}}a_{ij}(x) \nabla u_{ij} \cdot \nabla \phi_{\delta}(\nabla u) \phi_{\delta}(\nabla u) \eta^2 \d x
+ \int_{B_1^{+}}\anew^2 \,  \indicator_{\{|\nabla u|_{A(0)} > \delta\}} \, \eta^2\d x\\
& \quad \quad \quad + \int_{B_1^{+}} \vv_{i}(x) \partial_{z_k} \phi_{\delta}(\nabla u) u_{ik} \phi_{\delta}(\nabla u) \eta^2 \d x.
\end{split}
\end{equation}

Next, we treat the zero order term $f'(u) \phi_{\delta}(\nabla u)$ in the linearization $\jacobi [\phi_{\delta}(\nabla u)]$.
By direct computation $|\phi_{\delta}(\nabla u) - \nabla \phi_{\delta}(\nabla u) \cdot \nabla u| \leq \delta$ and hence
\begin{equation}
\label{uta1}
\int_{B_1^+} f'(u) \phi_{\delta}(\nabla u)^2 \eta^2 \d x \geq
\int_{B_{1}^{+}} f'(u) \nabla u \cdot \nabla \phi_{\delta}(\nabla u) \phi_{\delta}(\nabla u) \eta^2 \d x
- \delta \int_{B_{1}^{+}} |f'(u)| \phi_{\delta}(\nabla u) \eta^2 \d x.
\end{equation}
Using the equation, we integrate by parts the first term in the right-hand side of~\eqref{uta1} as
\begin{equation}
\label{uta2}
\begin{split}
& \int_{B_{1}^{+}} f'(u) \nabla u \cdot \nabla \phi_{\delta}(\nabla u) \, \phi_{\delta}(\nabla u)\eta^2 \d x 
= \int_{B_{1}^{+}} \nabla [f(u)] \cdot \nabla \phi_{\delta}(\nabla u)  \, \phi_{\delta}(\nabla u)\eta^2 \d x\\
& = \int_{B_{1}^{+}} L u\,  \div\left( \nabla \phi_{\delta}(\nabla u) \, \phi_{\delta}(\nabla u)\eta^2 \right) \d x
- \int_{\partial^0 B_{1}^{+}} f(u) \partial_{z_n} \phi_{\delta}(\nabla u)  \, \phi_{\delta}(\nabla u)\eta^2 \d \hcal^{n-1}.
\\
\end{split}
\end{equation}
Moreover, undoing the integration by parts in
\[
\begin{split}
&  \int_{B_{1}^{+}} a_{ij}(x) u_{ij} \,  \div\left( \nabla \phi_{\delta}(\nabla u) \, \phi_{\delta}(\nabla u)\eta^2 \right) \d x \\
& \quad \quad = - \int_{B_{1}^{+}}  \nabla [a_{ij}(x) u_{ij}]  \cdot \nabla \phi_{\delta}(\nabla u) \, \phi_{\delta}(\nabla u)\eta^2 \d x \\
& \quad \quad \quad \quad \quad +\int_{\partial^0 B_{1}^{+}} a_{ij}(x) u_{ij} \partial_{z_n} \phi_{\delta}(\nabla u)  \, \phi_{\delta}(\nabla u)\eta^2 \d \hcal^{n-1},
 \end{split}
\]
substituting in \eqref{uta2} and using that 
$-L u = f(u)$ up to $\overline{B_1^+}$ by continuity, we deduce
\begin{equation}
\label{uta3}
\begin{split}
& \int_{B_{1}^{+}} f'(u) \nabla u \cdot \nabla \phi_{\delta}(\nabla u) \, \phi_{\delta}(\nabla u)\eta^2 \d x \\
& = - \int_{B_{1}^{+}}  \nabla [a_{ij}(x) u_{ij}]  \cdot \nabla \phi_{\delta}(\nabla u) \, \phi_{\delta}(\nabla u)\eta^2 \d x  
+\int_{B_{1}^{+}} \vv_i(x)  u_i \,  \div\left( \nabla \phi_{\delta}(\nabla u) \, \phi_{\delta}(\nabla u)\eta^2 \right) \d x\\
& \quad \quad \quad \quad +\int_{\partial^0 B_{1}^{+}} \vv_i(x) u_{i} \partial_{z_n} \phi_{\delta}(\nabla u)  \, \phi_{\delta}(\nabla u)\eta^2 \d \hcal^{n-1}.
\end{split}
\end{equation}
Finally, combining \eqref{uta6}, \eqref{uta1}, and \eqref{uta3}, we obtain
\begin{equation}
\label{uta9}
\begin{split}
&\int_{B_1^{+}}\phi_{\delta}(\nabla u)  \jacobi \left[\phi_{\delta}(\nabla u) \right]  \eta^2 \d x
= \int_{B_1^{+}}\phi_{\delta}(\nabla u)  L \left[\phi_{\delta}(\nabla u) \right]  \eta^2 \d x 
+ \int_{B_1^{+}} f'(u) \phi_{\delta}(\nabla u)^2  \eta^2 \d x\\
& \quad \quad \geq \int_{B_1^{+}}\anew^2 \,  \indicator_{\{|\nabla u|_{A(0)} > \delta\}} \, \eta^2\d x 
- \delta \int_{B_{1}^{+}} |f'(u)| \phi_{\delta}(\nabla u) \eta^2 \d x\\
& \quad \quad \quad \quad 
- \int_{B_{1}^{+}}  \nabla a_{ij}(x) u_{ij} \cdot \nabla \phi_{\delta}(\nabla u) \, \phi_{\delta}(\nabla u)\eta^2 \d x  \\
& \quad \quad \quad \quad \quad +\int_{B_{1}^{+}} \vv_i(x) \left\{  u_i \,  \div\left( \nabla \phi_{\delta}(\nabla u) \, \phi_{\delta}(\nabla u)\eta^2 \right) + \partial_{z_k} \phi_{\delta}(\nabla u) u_{ik} \phi_{\delta}(\nabla u)\eta^2 \right\} \d x\\
& \quad \quad \quad \quad \quad \quad
+\int_{\partial^0 B_{1}^{+}} \vv_i(x) u_{i} |e_n|_{A(0)} |\nabla u|_{A(0)}\eta^2 \d \hcal^{n-1},
\end{split}
\end{equation}
where in the boundary term we have used \eqref{choice:delta} and \eqref{bdy:dir}
to write
\[
\partial_{z_n} \phi_{\delta}(\nabla u) \phi_{\delta}(\nabla u) = e_n \cdot A(0) \nabla u = |e_n|_{A(0)} |\nabla u|_{A(0)} \quad \text{ on } \partial^0 B_1^+.
\]
The claim now follows from \eqref{uta9} by applying 
the uniform ellipticity, 
the coefficient bounds 
$\|\nabla a_{ij}\|_{L^{\infty}} + \|\vv_i\|_{C^{0}} \leq \varepsilon$, and
\begin{equation}
\label{uta7}
\phi_{\delta}(\nabla u) \leq C \left( |\nabla u| + \delta\right),
\end{equation}
\begin{equation}
\label{uta8}
|\nabla \phi_{\delta}(\nabla u)| + \phi_{\delta}(\nabla u) |D^2 \phi_{\delta}(\nabla u) | \leq C,
\end{equation} 
where $C$ are universal constants (and hence independent of $\delta$).

\vspace{3mm}\noindent
\textbf{Second term}. \textit{ We prove that}
\begin{equation}
\label{term2}
\begin{split}
&\left|\int_{B_1^{+}}  \cc_{\delta} \jacobi \left[\NN \cdot \nabla u\right] \eta^2 \d x \right|\\
&\quad \quad \quad \quad \leq C \varepsilon \int_{B_1^{+}} \left( |\nabla u| + \delta \right) \left( |D^2 u |  \eta^2  + |\nabla u| |\nabla(\eta^2)|\right) \d x.
\end{split}
\end{equation}

Computing, we have
\begin{equation}
\label{duta1}
L [\NN \cdot \nabla u] = \NN_k a_{ij}(x) u_{ijk} + \NN_k \vv_i(x) u_{ik} \quad \text{ a.e. in } B_1^+.
\end{equation}

Since $\cc_{\delta} = 0$ on $\partial^0 B_1^+$, integrating by parts and using the equation
\begin{equation}
\label{duta2}
\begin{split}
&\int_{B_1^+}   f'(u) \left(\NN \cdot \nabla u\right) \, \cc_{\delta} \eta^2 \d x
= \int_{B_1^+} \NN \cdot \nabla [f(u)]  \, \cc_{\delta}\eta^2 \d x = \int_{B_1^+}  (L u)  \, \NN \cdot \nabla \left( \cc_{\delta}\eta^2 \right) \d x\\
& \quad  \quad  \quad = - \int_{B_1^+} \NN \cdot \nabla[ a_{ij}(x) u_{ij} ]  \,  \cc_{\delta}\eta^2 \d x
+ \int_{B_1^+}  \vv_i(x) u_i  \, \NN \cdot \nabla \left( \cc_{\delta}\eta^2 \right) \d x,
\end{split}
\end{equation}
where in the last line we have integrated by parts again.
Combining \eqref{duta1} and \eqref{duta2}
\begin{equation}
\label{duta3}
\begin{split}
&\int_{B_1^{+}}  \cc_{\delta} \jacobi \left[\NN \cdot \nabla u\right] \eta^2 \d x \\
& = - \int_{B_1^+} \NN \cdot \nabla a_{ij}(x) u_{ij}  \,  \cc_{\delta}\eta^2 \d x 
+ \int_{B_1^+} \NN_k \vv_i(x) u_{ik}\, \cc_{\delta} \eta^2
+ \int_{B_1^+}  \vv_i(x) u_i  \, \NN \cdot \nabla \left( \cc_{\delta}\eta^2 \right) \d x.
\end{split}
\end{equation}
The claim follows from \eqref{duta3} by applying \eqref{uta7}, \eqref{uta8}, and the coefficient bounds.

\vspace{3mm}\noindent
\textbf{Third term}. \textit{ We prove that}
\begin{equation}
\label{term3}
\begin{split}
&\left|\int_{B_1^{+}} (\NN \cdot \nabla u) \jacobi \left[\phi_{\delta}(\nabla u) \right] \eta^2 \d x \right| \\
&  \leq C\int_{B_1^{+}}  (|\nabla u| + \delta) \big( |D^2 u | \{|\nabla(\eta^2)| + \varepsilon \eta^2\} + \varepsilon |\nabla u| |\nabla(\eta^2)| + |\nabla u| |D^2 (\eta^2)|\big) \d x \\
& \quad \quad \quad \quad+ C \int_{\partial^0 B_1^{+}}|\nabla u|^2  \big(|\nabla (\eta^2)|  +\varepsilon\eta^2\big) \d \hcal^{n-1}.
\end{split}
\end{equation}

By definition, we have
\begin{equation}
\label{duta4}
(\NN \cdot \nabla u) \jacobi \left[\phi_{\delta}(\nabla u) \right]  = (\NN \cdot \nabla u) L [\phi_{\delta}(\nabla u)] + \phi_{\delta}(\nabla u) f'(u) (\NN \cdot \nabla u).
\end{equation}
The idea is to integrate 
the first term in \eqref{duta4},
$\int_{B_1^+}  (\NN \cdot \nabla u) L [\phi_{\delta}(\nabla u)] \, \eta^2 \d x$, by parts to get the linearized equation acting on the directional derivative $\NN \cdot \nabla u$ instead of on the modulus $\phi_{\delta}(\nabla u)$.
It will then be easy to bound the remaining terms as in Step 2 above.

We write the operator in divergence form $L u =  \div(A(x) \nabla u) + \bb(x) \cdot \nabla u$ as in \eqref{op:nondiv}.
Integrating by parts twice in $\int_{B_1^{+}} (\NN \cdot \nabla u) \div \big(A(x) \nabla \left[\phi_{\delta}(\nabla u) \right] \big) \eta^2 \d x$,
we have
\begin{equation}
\label{duta5}
\begin{split}
&\int_{B_1^{+}}  (\NN \cdot \nabla u) \div \Big(A(x) \nabla \left[\phi_{\delta}(\nabla u) \right] \Big) \eta^2 \d x\\
&\quad = 
\int_{B_1^{+}} \phi_{\delta}(\nabla u) \div\Big( A(x) \nabla (\NN \cdot \nabla u)\Big) \eta^2 \d x \\
&\quad\quad +\int_{B_1^{+}}  \phi_{\delta}(\nabla u) \Big(2  A(x) \nabla (\NN \cdot \nabla u) \cdot \nabla(\eta^2)  
+ (\NN \cdot \nabla u) \, \div\big\{ A(x) \nabla (\eta^2)\big\}\Big) \d x \\
&\quad\quad +\int_{\partial^0 B_1^{+}} \Big( 
\phi_{\delta}(\nabla u) A(x) \nabla \big\{ (\NN \cdot \nabla u) \eta^2 \big\} \cdot e_n
-(\NN \cdot \nabla u) A(x) \nabla \left[\phi_{\delta}(\nabla u) \right] \cdot e_n \, \eta^2 
\Big)\d \hcal^{n-1}.
\end{split}
\end{equation}
Since $u$ is nonnegative and $u = 0$ on $\partial^0 B_1^+$, we have $\nabla u = |\nabla u|_{A(0)} |e_n|_{A(0)}^{-1} e_n$
and hence, using \eqref{bdy:dir} and \eqref{choice:delta}, the boundary integrand in \eqref{duta5} can be written simply as
\begin{equation}
\label{duta6}
 \phi_{\delta}(\nabla u) A(x) \nabla \big\{ (\NN \cdot \nabla u) \eta^2 \big\} \cdot e_n -(\NN\cdot \nabla u) A(x) \nabla \left[\phi_{\delta}(\nabla u)\right] \cdot e_n \, \eta^2  = |\nabla u|_{A(0)}^2 A(x) \nabla (\eta^2) \cdot e_n.
\end{equation}
Combining \eqref{duta5} and \eqref{duta6}, we deduce
\begin{equation}
\label{duta7}
\begin{split}
& \int_{B_1^+ } (\NN \cdot \nabla u) L [\phi_{\delta}(\nabla u)] \,  \eta^2 \d x\\
& = \int_{B_1^{+}} (\NN\cdot \nabla u) \div \Big(A(x) \nabla \left[\phi_{\delta}(\nabla u)\right] \Big) \, \eta^2 \d x 
+\int_{B_1^{+}} (\NN \cdot \nabla u) \big(\bb(x) \cdot \nabla \left[\phi_{\delta}(\nabla u)\right] \big) \, \eta^2 \d x\\
& = \int_{B_1^{+}} \phi_{\delta}(\nabla u) L [\NN \cdot \nabla u] \, \eta^2 \d x \\
&\quad\quad +\int_{B_1^{+}}  \phi_{\delta}(\nabla u) \Big(2  A(x) \nabla (\NN \cdot \nabla u) \cdot \nabla(\eta^2) + (\NN \cdot \nabla u) \, \div\big\{ A(x) \nabla (\eta^2)\big\}\Big) \d x \\
& \quad \quad \quad +\int_{B_1^{+}} \Big( -\phi_{\delta}(\nabla u) \, \bb(x) \cdot \nabla (\NN \cdot \nabla u) +(\NN \cdot \nabla u) \, \bb(x) \cdot \nabla \left[\phi_{\delta}(\nabla u)\right] \Big) \eta^2 \d x \\
&\quad\quad \quad \quad +\int_{\partial^0 B_1^{+}} |\nabla u|_{A(0)}^2 A(x) \nabla (\eta^2) \cdot e_n \d \hcal^{n-1}.
\end{split}
\end{equation}

We now treat the second term in~\eqref{duta4}.
Integrating by parts twice as in the proof of Step 2 
(this time including boundary terms) and using the equation, it follows that
\begin{equation}
\label{duta8}
\begin{split}
&\int_{B_1^+}   f'(u) \left(\NN \cdot \nabla u\right) \, 
\phi_{\delta}(\nabla u) \eta^2 \d x
= \int_{B_1^+} \NN \cdot \nabla [f(u)]  \, \phi_{\delta}(\nabla u)\eta^2 \d x \\
& \quad = \int_{B_1^+}  (L u)  \, \NN \cdot \nabla \left( \phi_{\delta}(\nabla u)\eta^2 \right) \d x - \int_{\partial^0 B_1^+} f(u) |e_n|_{A(0)} \, \phi_{\delta}(\nabla u) \eta^2
\d \hcal^{n-1}\\
& \quad = - \int_{B_1^+} \NN \cdot \nabla[ a_{ij}(x) u_{ij} ]  \,  \phi_{\delta}(\nabla u)\eta^2 \d x
+ \int_{B_1^+}  \vv_i(x) u_i  \, \NN \cdot \nabla \left( \phi_{\delta}(\nabla u)\eta^2 \right) \d x\\
& \quad \quad \quad \quad 
+ \int_{\partial^0 B_1^+} \vv_i (x) u_i |e_n|_{A(0)} \, \phi_{\delta}(\nabla u) \eta^2 \d \hcal^{n-1}.
\end{split}
\end{equation}

Finally, summing \eqref{duta7} and \eqref{duta8}, we obtain
\begin{equation}
\label{duta9}
\begin{split}
&\int_{B_1^+ } (\NN \cdot \nabla u) \jacobi [\phi_{\delta}(\nabla u)] \,  \eta^2 \d x \\ 
& = \int_{B_1^{+}} 
\Big( L [\NN \cdot \nabla u] 
-\NN \cdot \nabla[ a_{ij}(x) u_{ij} ]
\Big)
\phi_{\delta}(\nabla u)\eta^2 \d x
\\
&\quad\quad +\int_{B_1^{+}}  \phi_{\delta}(\nabla u) \Big(2  A(x) \nabla (\NN \cdot \nabla u) \cdot \nabla(\eta^2) + (\NN \cdot \nabla u) \, \div\big\{ A(x) \nabla (\eta^2)\big\}\Big) \d x \\
& \quad \quad \quad
+\int_{B_1^{+}} \bb(x) \cdot \Big( 
(\NN \cdot \nabla u) \, \nabla \left[\phi_{\delta}(\nabla u)\right] 
-\phi_{\delta}(\nabla u) \, \nabla (\NN \cdot \nabla u) 
\Big) \eta^2 \d x \\
& \quad \quad \quad \quad 
+ \int_{B_1^+}  \vv_i(x) u_i  \, \NN \cdot \nabla \left( \phi_{\delta}(\nabla u)\eta^2 \right) \d x\\
&\quad\quad \quad \quad \quad +\int_{\partial^0 B_1^{+}} \Big(|\nabla u|_{A(0)}^2 A(x) \nabla (\eta^2) \cdot e_n
+\vv_i (x) u_i |e_n|_{A(0)} \, \phi_{\delta}(\nabla u) \eta^2 \Big)
 \d \hcal^{n-1}.
\end{split}
\end{equation}
Noticing that $L [\NN \cdot \nabla u] - \NN \cdot \nabla[ a_{ij}(x) u_{ij} ] = - \NN \cdot \nabla a_{ij}(x) u_{ij} + \NN_k \vv_i(x) u_{ik}$,
every term in the right-hand side of \eqref{duta9} can be bounded as claimed in \eqref{term3}.
For this, apply
the uniform ellipticity, the coefficient bounds $\|\nabla a_{ij}\|_{L^{\infty}} + \|\vv_i\|_{C^{0}} + \|\bb_i\|_{L^{\infty}} \leq 2 \varepsilon$, and the estimates \eqref{uta7} and \eqref{uta8}.

\vspace{3mm}\noindent
\textbf{Conclusion}.
Applying the three estimates \eqref{term1}, \eqref{term2}, and \eqref{term3}
in \eqref{bdy:curv:1} yields the lower bound
\begin{equation}
\label{duta10}
\begin{split}
&\int_{B_1^{+}} \cc_{\delta} \jacobi \cc_{\delta} \,\eta^2 \d x \\
& \quad \geq \int_{B_1^{+} \cap \{|\nabla u|_{A(0)} > \delta\}}  \anew^2\, \eta^2 \d x - C \delta \int_{B_{1}^{+}} |f'(u)| ( |\nabla u| + \delta) \eta^2 \d x\\
& \quad  \quad  \quad - C\int_{B_1^{+}}  (|\nabla u| + \delta) \left( |D^2 u | \{|\nabla(\eta^2)| + \varepsilon \eta^2\} + \varepsilon |\nabla u| |\nabla(\eta^2)| + |\nabla u| |D^2 (\eta^2)|\right) \d x \\
& \quad \quad \quad \quad  \quad - C \int_{\partial^0 B_1^{+}} |\nabla u|^2 (|\nabla (\eta^2)|  +\varepsilon\eta^2) \d \hcal^{n-1}.
\end{split}
\end{equation}
By the integral stability inequality \eqref{stab:half:jac} with $\cc = \cc_{\delta}$, we also have the upper bound
\begin{equation}
\label{duta11}
\begin{split}
\int_{B_1^{+}}  \cc_{\delta} \jacobi \cc_{\delta} \, \eta^2 \d x & 
\leq \int_{B_1^+} \phi_{\delta}(\nabla u)^2 |\nabla \eta - \textstyle\frac{1}{2} \eta A^{-1}(x) \bb(x) |_{A(x)}^2  \d x\\
& \leq C \int_{B_1^+} (|\nabla u| + \delta)^2  \left(|\nabla \eta|^2 + \varepsilon^2 \eta^2\right) \d x.
\end{split}
\end{equation}
Hence, combining 
\eqref{duta10} and \eqref{duta11} and taking the limit as $\delta \to 0$, we deduce the claim
\end{proof}

In order to prove the remaining estimates in Theorem~\ref{thm:sz}, we need to control the right-hand side of~\eqref{eq:szz}.
For this, next we prove two basic Hessian estimates for (generalized) superharmonic functions.
We essentially follow the proof of Theorem~1.2 in \cite{ErnetaInterior}, but now including boundary terms.


\begin{lemma}
\label{lemma:hess:grad}
Let $u \in C^2(\overline{B_1^{+}})$ be superharmonic in the sense that $L u \leq 0$ in $B_1^{+}$,
where $L$ satisfies conditions~\eqref{elliptic} and~\eqref{reg:coeffs} in $\Omega = B_1^{+}$.
Assume that
\[
\|D A\|_{L^{\infty}(B_{1}^{+})} + \|\vv\|_{L^{\infty}(B_{1}^{+})} \leq \varepsilon
\]
for some $\varepsilon > 0$.


Then,
there exists a universal $\varepsilon_0 > 0$ with the following property:
if $\varepsilon \leq \varepsilon_0$,
then, for all $\zeta \in C^{0,1}_c(B_1)$ with $\zeta \geq 0$, we have
\begin{equation}
\label{hess:test}
\begin{split}
\int_{B_1^+}  |D^2 u| \, \zeta \d x &\leq  C \int_{B_1^+} |\nabla u| \, \left(  |\nabla \zeta| + \varepsilon \zeta \right) \d x 
+C \int_{B_1^+} \anew \, \zeta \d x
+ C \int_{\partial^0 B_1^{+}} |\nabla u| \, \zeta \d x\\
\end{split}
\end{equation}
and
\begin{equation}
\label{hessgrad:test}
\begin{split}
\int_{B_1^+}  |D^2 u| |\nabla u| \, \zeta \d x &\leq  C\int_{B_1^+}|\nabla u|^2 \, \left(|\nabla \zeta| + \varepsilon \zeta\right)\d x 
+C \int_{B_1^+} \anew |\nabla u|\, \zeta \d x \\
& \quad \quad \quad \quad \quad + C \int_{\partial^0 B_1^{+}} |\nabla u|^2\zeta \d \hcal^{n-1},
\end{split}
\end{equation}
where $C$ is a universal constant.
\end{lemma}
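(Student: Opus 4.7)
The plan is to reduce both bounds to estimates on $\anew$ combined with integration-by-parts corrections, by exploiting the pointwise identity
\[
\tr(A(x) D^2 u \, A(0) D^2 u) = \anew^2 + |\nabla u|_{A(0)}^{-2} \, |D^2 u \, A(0) \nabla u|_{A(x)}^2
\]
built into the definition~\eqref{def:a}. Combined with uniform ellipticity, this yields a pointwise control of the form $|D^2 u| \leq C \anew + C\, |h|$, where $h$ is a scalar ``radial Hessian'' quantity (in the flat isotropic case, simply $u_{\nu\nu}$ with $\nu = \nabla u/|\nabla u|$) extracted from the column of $D^2 u$ in the direction of $A(0)\nabla u$. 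The key observation is that the equation $L u \leq 0$ together with $\|\vv\|_{L^{\infty}} \leq \varepsilon$ yields $a_{ij}(x) u_{ij} \leq \varepsilon |\nabla u|$; after subtracting off the tangential Hessian entries (each pointwise bounded by $\anew$), this produces the \emph{one-sided} estimate $h \leq C\anew + C\varepsilon|\nabla u|$. The elementary identity $|h| = 2 h_+ - h$ then reduces the task to controlling the \emph{signed} integral of $h$, which is substantially more tractable than $|h|$ itself.

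For~\eqref{hess:test}, the signed integral $\int_{B_1^+} h\, \zeta$ is, modulo tangential-Hessian contributions bounded by $C\int \anew \zeta$, equivalent to a weighted version of $\int a_{ij}(x) u_{ij}\, \zeta = \int (L u - \vv_i u_i)\, \zeta$. Writing $L$ in divergence form~\eqref{op:nondiv} and integrating $-\int L u \cdot \zeta$ by parts produces the bulk term $\int A\nabla u \cdot \nabla \zeta$ (contributing $C\int |\nabla u||\nabla \zeta|$), a lower-order term $\int \bb \cdot \nabla u\, \zeta$ contributing $C\varepsilon \int |\nabla u|\zeta$, and a surface integral on $\partial^0 B_{1}^{+}$. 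On the flat part of the boundary, the Dirichlet condition $u = 0$ with $u \geq 0$ forces $\nabla u = u_n e_n$, so the surface integrand is directly bounded by $C|\nabla u| \zeta$, yielding~\eqref{hess:test}.

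For~\eqref{hessgrad:test}, the same reduction brings us to controlling $\int h\, |\nabla u|\, \zeta$. The crucial step is an integration by parts using the identity $2\, D^2 u \nabla u = \nabla (|\nabla u|^2)$: applied to $\int h\, |\nabla u|\, \zeta$, it produces an identical copy of the integrand on the right-hand side with opposite sign, allowing self-absorption with a factor of $1/2$. The leftover terms are precisely of the desired form $C\int |\nabla u|^2 (|\nabla \zeta| + \varepsilon \zeta)$, $C\int \anew |\nabla u|\, \zeta$, and $C\int_{\partial^0 B_1^+} |\nabla u|^2 \zeta$. The main obstacle will be carefully tracking the $O(\varepsilon)$ corrections produced by $\nabla A$ and $\vv$ at every integration by parts, which yield additional terms of the form $C\varepsilon \int |\nabla u| |D^2 u|\, \zeta$ on the right-hand side; absorbing these back into the left-hand side is precisely why the smallness hypothesis $\varepsilon \leq \varepsilon_0$ for a universal $\varepsilon_0 > 0$ is needed.
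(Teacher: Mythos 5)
Your proposal follows essentially the same route as the paper: the pointwise identity built into the definition of $\anew$, uniform ellipticity, and the one-sided trace bound from $Lu\le 0$ give $|D^2u|\le -C\,\tr(A(0)D^2u)+C\anew+C\varepsilon|\nabla u|+C\varepsilon|x||D^2u|$ a.e., and then one integrates by parts (for \eqref{hess:test}) or uses the divergence identity $\div(|\nabla u|_{A(0)}A(0)\nabla u)=$ ``$2h+$tangential'' (for \eqref{hessgrad:test}) and absorbs the small Hessian errors for $\varepsilon\le\varepsilon_0$, exactly as in the text. The only slip is your appeal to ``$u=0$ with $u\ge 0$ on $\partial^0 B_1^+$'' to control the boundary integrand: Lemma~\ref{lemma:hess:grad} does not assume any boundary condition, and none is needed — the flux term $A(0)\nabla u\cdot e_n$ (or $A(x)\nabla u\cdot e_n$ in your divergence-form variant) is already bounded by $C|\nabla u|$ via uniform ellipticity alone; otherwise the argument is the paper's.
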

\begin{proof}
Consider the auxiliary function
\[
\azero := 
\left\{\begin{array}{ll}
\Big( \|A^{1/2}(0)D^2 u A^{1/2}(0)\|_{\rm HS}^2 
- 
|A^{1/2}(0) D^2 u A^{1/2}(0) \nn(x)|^2
\Big)^{1/2} & \text{if } \nabla u \neq 0\\
        0 & \text{if } \nabla u = 0,
\end{array}\right.
\]
where the vector field 
$\nn(x)$ has been introduced in \eqref{n:vector} in the definition of $\anew$ in \eqref{def:anew}.
Using that $\|D A\|_{L^{\infty}(B_1^{+})} \leq \varepsilon$,
it is easy to show (see \cite{ErnetaInterior}) that
\begin{equation}
\label{tim1}
|\anew^2 - \azero^2| \leq C \varepsilon |x| \azero^2 \quad \text{ in } B^{+}_1,
\end{equation}
where $C$ always denotes a universal constant.
In particular, the functions $\anew$ and $\azero$ are comparable for $\varepsilon$ small.
Using that $L u \leq 0$, following \cite{ErnetaInterior}, it is not hard to show that 
\begin{equation}
\label{tim2}
|D^2 u| \leq - C \tr\big( A(0) D^2 u\big) + C \azero + C \varepsilon |x| |D^2 u| + C \varepsilon |\nabla u| \quad \text{ a.e. in } B_1^{+}.
\end{equation}

First we prove the Hessian bound~\eqref{hess:test}.
Multiplying \eqref{tim2} by $\zeta$ and integrating in $B_1^{+}$
\begin{equation}
\label{tim3}
\begin{split}
\int_{B_1^+}  |D^2 u| \zeta \d x &\leq - C \int_{B_1^+} \tr\big( A(0) D^2 u\big) \, \zeta \d x +C \int_{B_1^+} \azero \, \zeta \d x\\
& \quad \quad \quad 
+C \varepsilon \int_{B_1^+} |x| |D^2 u| \zeta \d x+C \varepsilon \int_{B_1^+}  |\nabla u|  \, \zeta \d x.
\end{split}
\end{equation}
Integrating by parts, we have
\[
-\int_{B_1^+} \tr\big( A(0) D^2 u\big) \, \zeta \d x = \int_{B_1^{+}} A(0) \nabla u \cdot \nabla \zeta \d x - \int_{\partial^0 B_1^{+}} A(0) \nabla u \cdot e_n \, \zeta \d \hcal^{n-1},
\]
and substituting in~\eqref{tim3}, by uniform ellipticity,
\begin{equation}
\label{tim4}
\begin{split}
\int_{B_1^+}  |D^2 u| \zeta \d x &\leq  C \int_{B_1^+} |\nabla u| \, |\nabla \zeta| \d x 
+C \int_{B_1^+} \azero \, \zeta \d x
+ C \int_{\partial^0 B_1^{+}} |\nabla u| \, \zeta \d x
\\
&\quad \quad \quad  + C \varepsilon \int_{B_1^+} |x| |D^2 u| \zeta \d x+C \varepsilon \int_{B_1^+}  |\nabla u|  \, \zeta \d x.
\end{split}
\end{equation}
Choosing $\varepsilon_0 > 0$ universal sufficiently small,
we can absorb the Hessian term in the right-hand side of \eqref{tim4},
and by \eqref{tim1} (taking $\varepsilon_0$ smaller)
we deduce the first claim.

For the second estimate~\eqref{hessgrad:test},
multiplying \eqref{tim2} by $|\nabla u|_{A(0)} \zeta$ and integrating in $B_1^{+}$
\begin{equation}
\label{tim5}
\begin{split}
\int_{B_1^+}  |D^2 u| |\nabla u|_{A(0)} \zeta \d x &\leq - C \int_{B_1^+} |\nabla u|_{A(0)} \tr\big( A(0) D^2 u\big) \, \zeta \d x 
+C \int_{B_1^+} \azero |\nabla u|_{A(0)}\, \zeta \d x\\
&\quad \quad \quad  + C \varepsilon \int_{B_1^+} |x| |D^2 u| |\nabla u|_{A(0)} \zeta \d x+C \varepsilon \int_{B_1^+}  |\nabla u|^2  \, \zeta\d x.
\end{split}
\end{equation}
The first integrand in the right-hand side of \eqref{tim5} can be bounded by
\begin{equation}
\label{tim6}
\begin{split}
-|\nabla u|_{A(0)} \tr\big(A(0) D^2 u\big) &\leq  - \frac{1}{2}\div \big(|\nabla u|_{A(0)} A(0)\nabla u \big) +C  \azero |\nabla u|_{A(0)} \\
\end{split}
\quad \text{ a.e. in } B_1^+.
\end{equation}
Substituting \eqref{tim6} in \eqref{tim5} leads to
\[
\begin{split}
\int_{B_1^+}  |D^2 u| |\nabla u|_{A(0)} \zeta \d x &\leq 
- C\int_{B_1^+}\div \big(|\nabla u|_{A(0)} A(0)\nabla u \big) \,\zeta \d x 
+C \int_{B_1^+} \azero |\nabla u|_{A(0)}\, \zeta \d x\\
&\quad \quad \quad + C \varepsilon \int_{B_1^+} |x| |D^2 u| |\nabla u|_{A(0)}  \, \zeta\d x +C \varepsilon \int_{B_1^+}  |\nabla u|^2  \, \zeta\d x,
\end{split}
\]
and integrating by parts the divergence term, we obtain the inequality
\begin{equation}
\label{tim7}
\begin{split}
\int_{B_1^+}  |D^2 u| |\nabla u|_{A(0)} \zeta \d x &\leq  C\int_{B_1^+}|\nabla u|^2 \, \left(|\nabla \zeta| + \varepsilon \zeta\right)\d x 
+C \int_{B_1^+} \azero |\nabla u|_{A(0)}\, \zeta \d x\\
&\quad \quad \quad  + C \varepsilon \int_{B_1^+} |x| |D^2 u| |\nabla u|_{A(0)}  \, \zeta\d x + C \int_{\partial^0 B_1^{+}} |\nabla u|^2\zeta \d \hcal^{n-1}.
\end{split}
\end{equation}
Once again, choosing $\varepsilon_0 > 0$ universal small, we can absorb the ``Hessian times the gradient'' error in \eqref{tim7} into the left-hand side,
and by \eqref{tim1} we deduce the second claim.
\end{proof}

Thanks to Lemma~\ref{lemma:hess:grad}, we can get rid of the Hessian terms appearing
in the right-hand side of the first inequality~\eqref{eq:szz} in Theorem~\ref{thm:sz}:

\begin{lemma}
\label{lemma:sz2}
Let $u \in W^{3,p}(B_1^+)$, for some $p > n$,
be a nonnegative stable solution of 
$- L u = f(u)$ in $B_{1}^{+}$, with $u = 0$ on $\partial^0 B_{1}^{+}$.
Assume that $f \in C^1(\R)$ is nonnegative.
Assume that $L$ satisfies conditions~\eqref{elliptic},~\eqref{reg:coeffs}, and~\eqref{reg:b} in $\Omega = B_1^{+}$, and that
\[
\|D A\|_{L^{\infty}(B_{1}^{+})} + \|\vv\|_{L^{\infty}(B_{1}^{+})} \leq \varepsilon
\]
for some $\varepsilon > 0$.

If $\varepsilon \leq \varepsilon_0$, then
\[
\int_{B_{8/9}^{+}} \anew^2  \d x \leq C \int_{B_1^{+}} |\nabla u|^2 \d x + C \int_{\partial^0 B_1^{+}} |\nabla u|^2 \d \hcal^{n-1},
\]
where $\varepsilon_0 > 0$ and $C$ are universal constants.
\end{lemma}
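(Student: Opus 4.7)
The plan is to apply the Sternberg--Zumbrun-type inequality~\eqref{eq:szz} with a cutoff adapted to nested half-balls, use the Hessian-gradient bound~\eqref{hessgrad:test} of Lemma~\ref{lemma:hess:grad} to eliminate the $|D^2 u|\,|\nabla u|$ term on its right-hand side, and close the argument by Simon's absorption lemma recalled in Appendix~\ref{app:simon}. The hypothesis $\varepsilon \leq \varepsilon_0$ serves both to make Lemma~\ref{lemma:hess:grad} applicable (note that $-Lu = f(u) \geq 0$, so $u$ is superharmonic in the required sense) and to absorb all lower-order $\varepsilon$-contributions using $\varepsilon_0 \leq 1$.

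For $1/2 \leq \rho < \rho' \leq 8/9$, I would fix a standard cutoff $\eta \in C^\infty_c(B_{\rho'})$ with $\eta \equiv 1$ on $B_{\rho}$, $|\nabla \eta| \leq C(\rho'-\rho)^{-1}$, and $|D^2 \eta| \leq C(\rho'-\rho)^{-2}$. The function $\zeta := |\nabla(\eta^2)| + \varepsilon \eta^2$ is Lipschitz, nonnegative, and compactly supported in $B_1$ with $\|\zeta\|_{L^\infty} \leq C(\rho'-\rho)^{-1}$ and $\|\nabla \zeta\|_{L^\infty} \leq C(\rho'-\rho)^{-2}$, so it is admissible in~\eqref{hessgrad:test}. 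Plugging $\eta$ into~\eqref{eq:szz} and substituting the bound~\eqref{hessgrad:test} applied with test function $\zeta$ to control the middle term, after elementary estimates using $\varepsilon \leq 1$, one arrives at
\[
\int_{B_\rho^+} \anew^2 \d x \leq C(\rho'-\rho)^{-k}\!\left( \int_{B_1^+}|\nabla u|^2 \d x + \int_{\partial^0 B_1^+}\! |\nabla u|^2 \d \hcal^{n-1}\right) + \frac{C}{\rho'-\rho} \int_{B_{\rho'}^+} \anew |\nabla u| \d x
\]
for some universal exponent $k > 0$.

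The remaining cross term is the main obstacle, since it contains exactly the quantity whose $L^2$ bound we seek. I would apply Young's inequality with parameter $\sigma > 0$,
\[
\frac{C}{\rho'-\rho} \int_{B_{\rho'}^+} \anew |\nabla u| \d x \leq \sigma \int_{B_{\rho'}^+} \anew^2 \d x + \frac{C^2}{4 \sigma (\rho'-\rho)^2} \int_{B_{\rho'}^+} |\nabla u|^2 \d x,
\]
and choose $\sigma = 1/4$. Writing $I(r) := \int_{B_r^+} \anew^2 \d x$ and $\mathcal{E} := \int_{B_1^+} |\nabla u|^2 \d x + \int_{\partial^0 B_1^+} |\nabla u|^2 \d \hcal^{n-1}$, this yields the iterative inequality
\[
I(\rho) \leq \tfrac{1}{4} I(\rho') + C (\rho'-\rho)^{-k'} \mathcal{E} \qquad \text{for all } 1/2 \leq \rho < \rho' \leq 8/9,
\]
for some universal $k' > 0$.

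Since the residual $\anew^2$ integral sits on the strictly larger half-ball $B_{\rho'}^+$, direct absorption into the left-hand side fails---this is precisely the point at which Simon's lemma is needed. The displayed inequality has exactly the form required by that lemma (Appendix~\ref{app:simon}), with $\theta = 1/4 < 1$ on the interval $[1/2, 8/9]$, and it removes the $I(\rho')$ term to give $I(8/9) \leq C \mathcal{E}$, which is the claim. Finiteness of $I(\rho')$ for any $\rho' < 1$, needed to legitimize the iteration, is guaranteed by the Sobolev embedding $W^{3,p}(B_1^+) \hookrightarrow C^2(\overline{B_1^+})$ recalled in Remark~\ref{remark:regul}.
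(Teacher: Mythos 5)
Your overall architecture matches the paper's up to a point: both plug the cutoff into \eqref{eq:szz}, then invoke \eqref{hessgrad:test} of Lemma~\ref{lemma:hess:grad} with the test function $\zeta = |\nabla(\eta^2)| + \varepsilon\eta^2$ to eliminate the $|D^2 u|\,|\nabla u|$ term, and must then deal with the surviving cross term $\int_{B_1^+} \anew\,|\nabla u|\,\zeta\,\d x$. The divergence is precisely at this last step, and you miss the key algebraic simplification that lets the paper close the argument in one stroke. Since
\[
\zeta = |\nabla(\eta^2)| + \varepsilon\eta^2 = |\eta|\bigl(2|\nabla\eta| + \varepsilon|\eta|\bigr),
\]
Cauchy--Schwarz on the cross term separates the weight as $|\eta| \cdot |\eta|\bigl(2|\nabla\eta|+\varepsilon|\eta|\bigr)^{\,\cdot}\dots$ and produces $\bigl(\int_{B_1^+}\anew^2\eta^2\,\d x\bigr)^{1/2}\cdot\bigl(\int_{B_1^+}|\nabla u|^2(|\nabla\eta|^2 + \varepsilon^2\eta^2)\,\d x\bigr)^{1/2}$. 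The first factor carries the \emph{same} weight $\eta^2$ as the left-hand side of \eqref{eq:szz}, so Young's inequality absorbs it directly; choosing $\eta \equiv 1$ on $B_{8/9}$ then finishes the proof. No iteration over nested radii and no appeal to any absorption lemma is needed.

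By contrast, you bound $\zeta$ by its $L^\infty$ norm, apply pointwise Young, and obtain $\int_{B_{\rho'}^+}\anew^2\,\d x$ on a strictly larger half-ball, so that absorption must be closed indirectly. This can in principle be made to work, but your execution has two concrete defects. First, the range $1/2 \leq \rho < \rho' \leq 8/9$ is wrong: the iterative inequality $I(\rho) \leq \tfrac14 I(\rho') + C(\rho'-\rho)^{-k'}\mathcal{E}$ on an interval $[a,b]$ yields, after absorption, a bound on $I(a)$ and not on $I(b)$ (the quantity being controlled shrinks as the radius decreases), so you would only conclude $\int_{B_{1/2}^+}\anew^2 \le C\mathcal{E}$ rather than the stated bound on $B_{8/9}^+$; the relevant range is $8/9 \leq \rho < \rho' < 1$. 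Second, the device you would then need is the classical ``hole-filling'' iteration lemma on a real interval (for a monotone scalar function with a vanishing geometric factor), not Simon's lemma as stated in Appendix~\ref{app:simon}. That latter lemma concerns subadditive set functions on the class of \emph{all} balls $B_\rho(y) \subset B_1$ with halved radius and varying centers, and it is what the paper actually uses later for~\eqref{ineq:pohozaev}; your concentric iteration does not meet its hypotheses as written.
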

\begin{proof}
Let $\varepsilon_0 > 0$ be the universal constant in the conclusion of Lemma~\ref{lemma:hess:grad}.
Applying~\eqref{hessgrad:test} in Lemma~\ref{lemma:hess:grad} with 
$\zeta = |\nabla (\eta^2)| + \varepsilon \eta^2 \in C^{0,1}_c(B_1)$
yields
\begin{equation}
\label{ze1}
\begin{split}
&\int_{B_1^+}  |D^2 u| |\nabla u| (|\nabla (\eta^2)| + \varepsilon \eta^2) \d x \\
&\leq  C\int_{B_1^+}|\nabla u|^2 \, \left(
|D^2 (\eta^2)| + \varepsilon |\nabla (\eta^2)| + \varepsilon^2 \eta^2\right)\d x 
+C \int_{B_1^+} \anew |\nabla u|\, (|\nabla (\eta^2)| + \varepsilon \eta^2) \d x \\
& \quad \quad \quad \quad \quad + C \int_{\partial^0 B_1^{+}} |\nabla u|^2 (|\nabla (\eta^2)| + \varepsilon \eta^2) \d \hcal^{n-1}.
\end{split}
\end{equation}
Since $|\nabla (\eta^2)| + \varepsilon \eta^2 = |\eta| \left( 2|\nabla \eta| + \varepsilon |\eta| \right)$, by Cauchy-Schwarz, the second term in \eqref{ze1} can be bounded by
\begin{equation}
\label{ze2}
\int_{B_1^+} \anew |\nabla u|\, (|\nabla (\eta^2)| + \varepsilon \eta^2) \d x \leq C \left( \int_{B_1^+} \anew^2 \eta^2\d x\right)^{1/2} \left( \int_{B_1^{+}} |\nabla u|^2\, \big(|\nabla \eta|^2 + \varepsilon^2 \eta^2\big) \d x \right)^{1/2}.
\end{equation}
Hence, applying \eqref{ze1} and \eqref{ze2} 
to the Hessian errors in the right-hand side of \eqref{eq:szz} in Theorem~\ref{thm:sz},
we obtain
\begin{equation}
\label{ze3}
\begin{split}
\int_{B_1^{+}}  \anew^2 \eta^2 \d x 
& \leq  C \left( \int_{B_1^+} \anew^2 \eta^2\d x\right)^{1/2} \left( \int_{B_1^{+}} |\nabla u|^2\, \big(|\nabla \eta|^2 + \varepsilon^2 \eta^2\big) \d x \right)^{1/2}\\
& \quad \quad \quad + C \int_{B_{1}^{+}} |\nabla u|^2 \left( |\nabla \eta|^2 + |D^2 (\eta^2)|+ \varepsilon |\nabla (\eta^2)| + \varepsilon^2 \eta^2\right) \d x\\
& \quad \quad \quad \quad \quad + C \int_{\partial^0 B_1^{+}} |\nabla u|^2 (|\nabla (\eta^2)|  +\varepsilon\eta^2) \d \hcal^{n-1}.
\end{split}
\end{equation}
Therefore, by Young's inequality, we can absorb the $\int_{B_1^{+}} \anew^2\, \eta^2 \d x$ term in \eqref{ze3} into the left-hand side. 
Choosing $\eta \in C^{\infty}_c(B_1)$ with $0 \leq \eta \leq 1$ in $B_1$ and 
$\eta = 1$ in $B_{8/9}$, 
by the universal bound $\varepsilon \leq \varepsilon_0$, we deduce the claim.
\end{proof}

Thanks to the preliminary lemmas above, we are now in position to conclude the proof of Theorem~\ref{thm:sz}:

\begin{proof}[Proof of the boundary estimates~\eqref{ineq:pohozaev}, \eqref{ineq:hessgrad}, \eqref{ineq:szkinda}, and \eqref{ineq:hessian} in
Theorem~\ref{thm:sz}]
Once we obtain the boundary gradient estimate~\ref{ineq:pohozaev},
the remaining inequalities \eqref{ineq:hessgrad}, \eqref{ineq:szkinda}, and \eqref{ineq:hessian}
will follow easily from Lemmas~\ref{lemma:hess:grad} and~\ref{lemma:sz2}.

To control the gradient on the boundary, we proceed in two steps.
First we employ the Pohozaev trick to bound the $L^2$ norm of $\nabla u$ on the lower boundary by the Dirichlet energy up to Hessian errors.
Secondly, we use Lemmas~\ref{lemma:hess:grad} and~\ref{lemma:sz2} to control these Hessian errors and apply Simon's lemma~(recalled in Appendix~\ref{app:simon}).

\vspace{3mm}\noindent
\textbf{Step 1.}
\textit{We prove that }
\[
\|\nabla u\|_{L^2(\partial^0 B_{2/3}^{+})}^2 \leq C (1 + \varepsilon) \|\nabla u\|^2_{L^2(B_{7/9}^{+})} + C \varepsilon \| |D^2 u| \, |\nabla u|\|_{L^1(B_{7/9}^{+})},
\]
\textit{where $C$ is a universal constant.}
 
Let $\eta \in C^{\infty}_c(B_{7/9})$.
Integrating by parts, by the properties of $u$ and the vector field $\NN$ defined in \eqref{def:n:field}, it is easy to check that
\begin{equation} \label{pohozaev:1}
\begin{split}
&|e_n|_{A(0)} \int_{\partial^0 B_1^+} |\nabla u |^2_{A(0)} \eta^2 \d \hcal^{n-1} \\
&\quad =\int_{B_1^+}  \div\left (| \nabla u|_{A(0)}^2 \NN - 2 (\NN \cdot \nabla u) A(0) \nabla u \right ) \eta^2 \d x \\
& \quad \quad \quad \quad + \int_{B_1^+}   \left (| \nabla u|_{A(0)}^2 \NN 
- 2 (\NN \cdot \nabla u) A(0) \nabla u \right) \cdot \nabla (\eta^2)\d x.
\end{split}
\end{equation}
The divergence term in \eqref{pohozaev:1} can be written as
 \[
\begin{split}
&\div\left (| \nabla u|_{A(0)}^2 \NN
- 2 (\NN \cdot \nabla u) A(0) \nabla u \right )  = 
-2 (\NN \cdot \nabla u) \tr (A(0) D^2 u)\\
&\quad = 
-2 (\NN \cdot \nabla u) L u +2 (\NN \cdot \nabla u) (\vv(x) \cdot \nabla u) +2 (\NN \cdot \nabla u) \tr\big(\{A(x) - A(0)\} D^2 u\big)\\
& \quad \leq - 2 (\NN \cdot \nabla u) L u + C \varepsilon |\nabla u|^2 + C \varepsilon |x|  |D^2 u| |\nabla u|,
\end{split}
\]
where in the last line we have used the bounds 
$\|\vv\|_{L^{\infty}(B_1^+)}\leq \varepsilon$ 
and $|A(x) - A(0)| \leq \varepsilon |x|$ for $x \in B_1^+$.
It follows that 
\begin{equation} \label{pohozaev:2}
\begin{split}
&|e_n|_{A(0)} \int_{\partial^0 B_1^+} |\nabla u |^2_{A(0)} \eta^2 \d \hcal^{n-1}  \\
& \quad \quad \leq 
-2 \int_{B_1^+}  (\NN \cdot \nabla u) Lu \, \eta^2 \d x + C \int_{B_1^{+}} |\nabla u|^2 \big(|\nabla (\eta^2)| + \varepsilon \eta^2\big) \d x \\
& \quad \quad \quad \quad \quad \quad \quad + C \varepsilon \int_{B_1^{+}} |x| |D^2 u| |\nabla u| \eta^2 \d x
\end{split}
\end{equation}
and, thus, it remains to control the term $-2 \int_{B_1^+} (\NN \cdot \nabla u) L u \, \eta^2 \d x$ in \eqref{pohozaev:2}.

Since $-Lu = f(u)$ in $B_1^+$, the primitive $F(t) := \int_{0}^{t}f(s) \d s$ of $f$ satisfies
\[
\NN \cdot \nabla [F(u)] = (\NN \cdot \nabla u) f(u) = -(\NN\cdot \nabla u) Lu,
\]
and the first term on the right hand side of \eqref{pohozaev:2} can be integrated by parts as
\begin{equation}\label{pohozaev:3}
- \int_{B_1^{+}} (\NN \cdot \nabla u) Lu \, \eta^2\d x = \int_{B_1^{+}} \NN \cdot \nabla [F(u)] \eta^2 \d x= -\int_{B_1^{+}} F(u) \big( \NN \cdot \nabla (\eta^2) \big)\d x. 
\end{equation}
By the monotonicity of $f$, since $u$ and $f$ are nonnegative, we have $|F(u) |\leq u f(u) =~- uL u$.
Hence, writing $L$ in divergence form $L u = \div(A(x) \nabla u) + \bb(x) \cdot \nabla u$ as in~\eqref{op:nondiv}, by the coefficient bound 
$\|\bb\|_{L^{\infty}(B_1^+)} \leq C \varepsilon$ 
we deduce
\begin{equation}
\label{bigfbound}
|F(u)| \leq - u \, \div (A(x) \nabla u) + C \varepsilon u \, |\nabla u|.
\end{equation}
Using \eqref{bigfbound}, 
we estimate the right-hand side of \eqref{pohozaev:3} by
\begin{equation}\label{pohozaev:4}
\Big|-\int_{B_1^{+}} F(u) \big( \NN \cdot \nabla (\eta^2) \big) \d x \Big| \leq - C\int_{B_1^{+}} u \, \div\big(A(x)\nabla u\big) |\nabla (\eta^2)| \d x + C \varepsilon \int_{B_1^{+}} u \, |\nabla u| |\nabla (\eta^2)| \d x,
\end{equation}
and since $|\nabla(\eta^2)|$ 
is Lipschitz,
the divergence term in \eqref{pohozaev:4} can be integrated by parts as
\begin{equation}\label{pohozaev:5}
\begin{split}
-\int_{B_1^{+}} u \, \div\big(A(x)\nabla u\big) |\nabla (\eta^2)| \d x 
&= \int_{B_1^{+}} | \nabla u|_{A(x)}^2|\nabla(\eta^2)| \d x + \int_{B_1^{+}} u \,A(x) \nabla u \cdot \nabla |\nabla(\eta^2)| \d x.
\end{split}
\end{equation}
Therefore, combining 
\eqref{pohozaev:3}, \eqref{pohozaev:4}, and \eqref{pohozaev:5}, we deduce
\begin{equation}
\label{poho}
-\int_{B_1^+}   (\NN \cdot \nabla u) Lu \, \eta^2\d x \leq
\int_{B_1^{+}} | \nabla u|_{A(x)}^2|\nabla(\eta^2)| \d x 
+ C \int_{B_1^{+}} u |\nabla u| \left( |D^2(\eta^2)| + \varepsilon |\nabla (\eta^2)| \right)\d x.
\end{equation}
Moreover,
we can bound the last term in \eqref{poho}
by Cauchy-Schwarz and the Poincaré inequality (valid since $u = 0$ on $\partial^0 B_1^+$) as
\begin{equation}
\label{poho:2}
\begin{split}
&\int_{B_1^{+}} u |\nabla u| \left( |D^2(\eta^2)| + \varepsilon |\nabla (\eta^2)| \right)\d x\\
&\leq C\left( \int_{B_{7/9}^{+}} |\nabla u|^2 \d x \right)^{1/2} \left(\int_{B_1^{+}} |\nabla u|^2 \left( |D^2(\eta^2)| + \varepsilon |\nabla (\eta^2)| \right)^2 \d x \right)^{1/2}.
\end{split}
\end{equation}

Applying the bounds \eqref{poho} and \eqref{poho:2} in \eqref{pohozaev:2}, by uniform ellipticity, we obtain
\begin{equation} \label{pohozaev:6}
\begin{split}
&\int_{\partial^0 B_1^+}  |\nabla u |^2  \eta^2\d \hcal^{n-1}  \\
& \quad \quad \leq 
C\left( \int_{B_{7/9}^{+}} |\nabla u|^2 \d x \right)^{1/2} \left(\int_{B_1^{+}} |\nabla u|^2 \left( |D^2(\eta^2)| + \varepsilon |\nabla (\eta^2)| \right)^2 \d x \right)^{1/2}\\
& \quad \quad \quad  \quad \quad  \quad \quad  + C \int_{B_1^{+}} |\nabla u|^2 \big(|\nabla (\eta^2)| + \varepsilon \eta^2\big) \d x + C \varepsilon \int_{B_1^{+}} |x| |D^2 u| |\nabla u| \eta^2 \d x.
\end{split}
\end{equation}
Finally, choosing 
$\eta \in C^{\infty}_c(B_{7/9})$ in \eqref{pohozaev:6} satisfying
$\eta = 1$ in $B_{2/3}$
and $0 \leq \eta \leq 1$ in $B_{7/9}$,
we deduce
\[
\int_{\partial^0 B_{2/3}^+} |\nabla u |^2 \d \hcal^{n-1}  \leq  
C \left(1 + \varepsilon\right) \int_{B_{7/9}^{+}} |\nabla u|^2 \d x+ C \varepsilon \int_{B_{7/9}^{+}} |x| |D^2 u | |\nabla u| \d x,
\]
which yields the claim.

\vspace{3mm}\noindent
\textbf{Step 2.}
\textit{Conclusion.}

Let $\varepsilon_0 > 0$ be the universal constant in the conclusion of Lemma~\ref{lemma:hess:grad}.
Applying this result with a cut-off $\zeta \in C^{1}_c(B_{8/9})$ such that $0 \leq \zeta \leq 1$ and $\zeta = 1$ in $B_{7/9}$,
if $\varepsilon \leq \varepsilon_0$, then
\begin{equation}
\label{arr0}
\| |D^2 u| \, |\nabla u|\|_{L^1(B_{7/9}^{+})} \leq C \|\nabla u\|_{L^2(B_{8/9}^{+})}^2 + C \|\nabla u\|_{L^2(\partial^0 B_{8/9}^{+})}^2 + C \| \anew \, |\nabla u|\|_{L^1(B_{8/9}^{+})}.
\end{equation}
Hence, applying Cauchy-Schwarz in \eqref{arr0} and by Lemma~\ref{lemma:sz2}, we deduce
\begin{equation}
\label{arr1}
\| |D^2 u| \, |\nabla u|\|_{L^1(B_{7/9}^{+})} \leq C \|\nabla u\|_{L^2(B_{1}^{+})}^2 + C \|\nabla u\|_{L^2(\partial^0 B_{1}^{+})}^2.
\end{equation}

Let $\delta > 0$.
Using \eqref{arr1} in Step $1$ above,
letting $\varepsilon_{\delta} := \min \{\varepsilon_0, \delta/ C \}$, we obtain
\begin{equation}
\label{arr2}
\|\nabla u\|_{L^2(\partial^0 B_{2/3}^{+})}^2 \leq \delta \|\nabla u\|^2_{L^2(\partial^0 B_1^{+})} + C \|\nabla u\|_{L^2(B_1^{+})}^2
\quad \text{ for } \varepsilon \leq \varepsilon_{\delta}.
\end{equation}
Hence, by translation and rescaling of \eqref{arr2}, for all $y \in \partial^0 B_1^{+}$ and $\rho > 0$ such that $B^{+}_{\rho}(y) \subset B_1^{+}$, we have
\begin{equation}
\label{arr3}
\begin{split}
\rho \int_{\partial^{0} B_{2\rho/3}^{+}(y)} |\nabla u|^2 \d \hcal^{n-1} &\leq \delta \rho \int_{\partial^0 B^{+}_{\rho}(y)} |\nabla u|^2 \d \hcal^{n-1} + C \int_{B^{+}_\rho(y)} |\nabla u|^2 \d x\\
&\leq \delta \rho \int_{ \partial^0 B^{+}_{\rho}(y)} |\nabla u|^2 \d \hcal^{n-1} + C \int_{B_1^{+}} |\nabla u|^2 \d x \quad \quad 
\text{ for } \varepsilon \leq \varepsilon_{\delta}.
\end{split}
\end{equation}

Since $y \in \partial^0 B_1^{+}$,
we have $y = (y', 0)$ for some $y' \in \R^{n-1}$, and
the lower boundary $\partial^0 B_{\rho}^{+}(y)$ is simply the $(n-1)$-dimensional ball 
$B'_{\rho}(y') := \{x \in \R^{n-1} \colon |x - y'| < \rho\} \subset \R^{n-1} = \partial^0 \R^{n}$.
By \eqref{arr3}, we can apply the Simon lemma to the subadditive quantity 
\[
B' \mapsto \int_{B'} |\nabla u|^2 \d \hcal^{n-1}
\]
on balls $B' \subset B_1' \subset \R^{n-1} = \partial^0 \R^{n}$ to deduce the bound
\begin{equation}
\label{real:poho}
\int_{\partial^0 B_{2/3}^{+}} |\nabla u|^2 \d \hcal^{n-1} \leq C \int_{B_1^{+}} |\nabla u|^2 \d x \quad \text{ for } \varepsilon \leq \varepsilon_{\delta},
\end{equation}
for some universal $\delta >0$.
In particular, we may take $\varepsilon_0$ universal
equal to $\varepsilon_{\delta}$
and this concludes the proof of \eqref{ineq:pohozaev}.


Finally, to deduce the remaining Hessian estimates we proceed as in the proof of \eqref{arr1}.
To prove \eqref{ineq:hessgrad},
we apply \eqref{hessgrad:test} from Lemma~\ref{lemma:hess:grad} with a cut-off function $\zeta \in C^{1}_c(B_{16/27})$ such that $0 \leq \zeta \leq 1$ and $\zeta = 1$ in $B_{4/7 = 16/28} \subset B_{16/27}$, and by Cauchy-Schwarz
\begin{equation}
\label{arrbis}
\begin{split}
\| |D^2 u| \, |\nabla u|\|_{L^1(B_{4/7}^{+})} 
&\leq C \|\nabla u\|_{L^2(B_{16/27}^{+})}^2 + C \|\nabla u\|_{L^2(\partial^0 B_{16/27}^{+})}^2 
+ C \|\anew\|_{L^2(B_{16/27}^{+})}^2\\
&\leq 
C \|\nabla u\|_{L^2(B_{2/3}^{+})}^2
+ C \|\nabla u\|_{L^2(\partial^0 B_{2/3}^{+})}^2,
\end{split}
\end{equation}
where in the last line we have used Lemma~\ref{lemma:sz2} applied to the rescaled function $u(\frac{2}{3} \cdot)$.
Applying \eqref{real:poho} to \eqref{arrbis} now leads to \eqref{ineq:hessgrad}.

Now, the bound~\eqref{ineq:szkinda} is easily obtained combining Lemma~\ref{lemma:sz2} with the boundary estimate~\eqref{ineq:pohozaev}.
The final estimate~\eqref{ineq:hessian} follows from Lemma~\ref{lemma:hess:grad} and the above.
\end{proof}


\section{Boundary $W^{1, 2+\gamma}$ estimate}
\label{section:higher:int}

First we control the Dirichlet energy by the $L^1$ norm of the solution under a smallness condition on the coefficients.
This follows from Theorem~\ref{thm:sz} and the interpolation inequalities of Cabr\'{e} in \cite{CabreRadial} (recalled in Appendix~\ref{app:interpolation} below).

\begin{lemma}
\label{lemma:l2l1}
Let $u \in W^{3,p}(B_1^{+})$,
for some $p > n$,
be a nonnegative stable solution of 
$- L u = f(u)$ in $B_{1}^{+}$, with $u = 0$ on $\partial^0 B_{1}^{+}$.
Assume that $f \in C^1(\R)$ is nonnegative and nondecreasing.
Assume that $L$ satisfies conditions~\eqref{elliptic},~\eqref{reg:coeffs}, and~\eqref{reg:b} in $\Omega = B_1^{+}$, and
\[
\|D A\|_{L^{\infty}(B_{1}^{+})} + \|\vv\|_{L^{\infty}(B_{1}^{+})} \leq \varepsilon
\]
for some $\varepsilon > 0$.

If $\varepsilon \leq \varepsilon_0$, then
\[
 \|\nabla u\|_{L^{2}(B^{+}_{1/2})}  \le C \| u\|_{L^{1}(B^{+}_{1})},
\]
where $\varepsilon_0 > 0$ and $C$ are universal constants.
\end{lemma}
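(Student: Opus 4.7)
The plan is to combine the boundary Hessian estimates of Theorem~\ref{thm:sz} with Cabr\'{e}'s interpolation inequality from Appendix~\ref{app:interpolation}, and to close the resulting recursive estimate through a scaling/covering argument together with Simon's absorption lemma from Appendix~\ref{app:simon}.

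First, I would fix $\varepsilon_0$ no larger than the universal threshold of Theorem~\ref{thm:sz}, so that estimates~\eqref{ineq:pohozaev} and~\eqref{ineq:hessgrad} are at our disposal: the boundary Dirichlet energy is controlled by the interior one, and
\[
\||\nabla u| |D^2 u|\|_{L^1(B_{4/7}^{+})} \leq C \|\nabla u\|_{L^2(B_1^{+})}^2.
\]
These two facts are the only consequences of the stability inequality~\eqref{ineq:stable} needed in what follows.

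Next, I would invoke Cabr\'{e}'s interpolation inequality on the nonnegative function $u$ which vanishes on the flat portion $\partial^0 B_1^+$. In the present setting this inequality should read (possibly after performing some integrations by parts and using~\eqref{ineq:pohozaev} to dispose of the boundary contribution) as
\[
\|\nabla u\|_{L^2(B_{1/2}^{+})}^2 \leq C \|u\|_{L^1(B_1^{+})} \left( \|u\|_{L^1(B_1^{+})} + \||\nabla u| |D^2 u|\|_{L^1(B_{4/7}^{+})}^{1/2} \right),
\]
the drift $\vv_i$ contributing only additional lower-order errors that are absorbed thanks to $\varepsilon \leq \varepsilon_0$. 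Substituting the Hessian bound~\eqref{ineq:hessgrad} from Theorem~\ref{thm:sz} and using Young's inequality, this becomes
\[
\|\nabla u\|_{L^2(B_{1/2}^{+})}^2 \leq C \|u\|_{L^1(B_1^{+})}^2 + C \|u\|_{L^1(B_1^{+})} \|\nabla u\|_{L^2(B_1^{+})}.
\]

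The right-hand side still involves the gradient on the \emph{larger} half-ball, so direct absorption is not available. I would therefore translate and rescale this inequality to every half-ball $B_\rho^{+}(y)$ centered at $y \in \overline{B_{1/2}^{+}}$ with $B_\rho^{+}(y) \subset B_1^{+}$, and apply Simon's lemma to the subadditive set function $E \mapsto \|\nabla u\|_{L^2(E)}^2$. This trades the $\|\nabla u\|_{L^2(B_1^{+})}$ factor on the right for $\|u\|_{L^1(B_1^{+})}$ and yields the claim.

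The main obstacle is verifying that Cabr\'{e}'s interpolation, originally designed for flat interior settings without drift, adapts to the present half-ball geometry with variable coefficients. The surface contributions on $\partial^0 B_1^{+}$ that arise in any integration by parts are precisely those handled by the Pohozaev estimate~\eqref{ineq:pohozaev}, and the first-order coefficient errors are absorbed using the smallness $\|\vv\|_{L^\infty} \leq \varepsilon_0$; the continuity assumption~\eqref{reg:b} ensures that these boundary integrals are meaningful in the first place, as highlighted in Remark~\ref{remark:cont}.
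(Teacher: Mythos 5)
Your overall skeleton matches the paper's: interpolation controlling $\|\nabla u\|_{L^2}$ by $\||\nabla u|\, D^2u\|_{L^1}$ and $\|u\|_{L^1}$, feed in the Hessian bound~\eqref{ineq:hessgrad} from Theorem~\ref{thm:sz}, then rescale and absorb via Simon's lemma. However, there are two concrete gaps.

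First, the form you write for the interpolation inequality,
\[
\|\nabla u\|_{L^2(B_{1/2}^{+})}^2 \leq C \|u\|_{L^1(B_1^{+})}\Big( \|u\|_{L^1(B_1^{+})} + \||\nabla u|\, |D^2 u|\|_{L^1(B_{4/7}^{+})}^{1/2}\Big),
\]
has discarded the free parameter that makes the absorption work. The correct chain is: cover $B_{1/2}^+$ by small cubes $Q_j \subset B_{4/7}^+$, apply Proposition~\ref{interpol} and Proposition~\ref{nash} on each with $\tilde\delta = \delta^{3/2}$, and sum, obtaining
\[
\|\nabla u\|_{L^2(B_{1/2}^+)}^2 \leq C\delta\, \||\nabla u|\,D^2u\|_{L^1(B_{4/7}^+)} + C\delta\,\|\nabla u\|_{L^2(B_{4/7}^+)}^2 + C\delta^{-2-\frac{3n}{2}}\|u\|_{L^1(B_1^+)}^2
\]
for \emph{every} $\delta\in(0,1)$. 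After~\eqref{ineq:hessgrad} this becomes $\|\nabla u\|_{L^2(B_{1/2}^+)}^2 \leq C\delta\, \|\nabla u\|_{L^2(B_1^+)}^2 + C_\delta \|u\|_{L^1(B_1^+)}^2$, with $\delta$ at our disposal. In your version, once you substitute~\eqref{ineq:hessgrad} and apply Young, the coefficient multiplying $\|\nabla u\|_{L^2(B_1^+)}^2$ is a fixed universal constant (essentially $1/2$), not a tunable small number; Simon's lemma needs the coefficient to lie below a threshold $\delta(n,\beta)$ that you cannot enforce without the free parameter. So your estimate, as written, does not close.

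Second, you attribute a role to~\eqref{ineq:pohozaev} and to the drift smallness inside the interpolation step itself: you say the surface contributions from integration by parts are ``disposed of'' by the Pohozaev estimate and that $\vv_i$ produces extra errors there. Propositions~\ref{interpol} and~\ref{nash} are purely function-theoretic estimates on cubes and involve neither the operator nor the boundary $\partial^0 B_1^+$; the paper applies them on open cubes compactly inside $B_{4/7}^+$, so no surface terms or coefficient terms appear. The stability, the drift smallness, and the boundary machinery (including~\eqref{ineq:pohozaev}) are all consumed upstream in Theorem~\ref{thm:sz} when establishing~\eqref{ineq:hessgrad}. Relatedly, your Simon-lemma step needs a covering argument that mixes rescaled versions of the half-ball estimate (centered on $\partial^0 B_1^+$) with the interior estimate of~\cite{ErnetaInterior}*{Proposition~1.3} for balls compactly contained in $B_1^+$; only the collection of both types of balls covers an arbitrary $\R^n_+ \cap B_{\rho/2}(y)$. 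That interior ingredient is missing from your outline and is genuinely needed.
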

\begin{proof}
We cover $B_{1/2}^{+}$ 
(except for a set of measure zero) 
with a family of disjoint open cubes $Q_j \subset \R^{n}_{+}$
of the same side-length and small enough so that $Q_j\subset B_{4/7}^{+}$.
The side-length and the number of cubes depend only on $n$.
Combining the interpolation inequalities of Proposition \ref{interpol} (with $p = 2$) and Proposition \ref{nash}, rescaled from the unit cube to $Q_j$, 
with 
$\tilde\delta= \delta^{3/2}$ for a given $\delta\in(0,1)$,
we have
\begin{equation*}
\int_{Q_j}|\nabla u|^{2}dx \leq C\delta \int_{Q_j}\lvert D^2u\rvert |\nabla u| \,dx+ C\delta \int_{Q_j}|\nabla u|^2dx+C\delta^{-2-\frac{3n}{2}}\left( \int_{Q_j}|u|\,dx\right)^2.
\end{equation*} 
Since $Q_j\subset B_{4/7}^{+}$,
applying \eqref{ineq:hessgrad} from 
Theorem~\ref{thm:sz},
for $\varepsilon \leq \varepsilon_0$ we deduce
\begin{equation*}
\int_{Q_j}|\nabla u|^{2}dx \leq C \delta \int_{B_1^{+}}|\nabla u|^2dx+C\delta^{-2-\frac{3n}{2}}\left( \int_{B_1^{+}}|u|\,dx\right)^2.
\end{equation*} 
Adding up these inequalities, we obtain
\begin{equation}\label{rofnew}
\|\nabla u\|_{L^{2}(B_{1/2}^{+})}^2 \le C \delta  \|\nabla u\|_{L^{2}(B_{1}^{+})}^2 + C\delta^{-2-\frac{3n}{2}} \|u\|_{L^{1}(B_{1}^{+})}^2 \quad \text{ for } \delta \in (0, 1) \text{ and } \varepsilon \leq \varepsilon_0.
\end{equation}

For $B_{\rho}^{+}(y) \subset B_1^{+}$ with $y \in \partial^0 B_1^{+}$, 
the function
$u^{y,\rho} :=u(y+\rho \, \cdot)$ 
is a stable solution to a semilinear equation with coefficients $A^{y,\rho} = A(y + \rho \, \cdot)$ and $\vv^{y,\rho} = \rho\, \vv(y + \rho \, \cdot)$.
In particular, since $\rho \leq 1$, for $\varepsilon \leq \varepsilon_0$ we have
\[
\| D A^{y,\rho} \|_{L^{\infty}(B_{1}^{+})} + \| \vv^{y,\rho} \|_{L^{\infty}(B_{1}^{+})} \leq \rho \varepsilon \leq \varepsilon_0,
\]
and we may apply \eqref{rofnew} to $u^{y,\rho}$, which yields
\[
\rho^{n+2}\int_{B_{\rho/2}^{+}(y)}|\nabla u|^2\,dx \leq 
C\delta \rho^{n+2}\int_{B_{\rho}^{+}(y)}|\nabla u|^2\,dx
+ C\delta^{-2-\frac{3n}{2}}\left(\int_{B_{\rho}^{+}(y)}|u|\,dx\right)^2,
\]
hence
\begin{equation}
\label{dec1}
\begin{split}
\rho^{n+2}\int_{B_{\rho/2}^{+}(y)}|\nabla u|^2\,dx
\leq & C \delta \rho^{n+2}\int_{B_{\rho}^{+}(y)}|\nabla u|^2\,dx
+C\delta^{-2-\frac{3n}{2}}
\|u\|_{L^1(B_1^{+})}^2\\
& \quad \quad  \text{ for all } B_{\rho}^{+}(y)\subset B_1^{+} \text{ with } y \in \partial^0 B_1^{+} \text{ and } \delta \in (0,1).
\end{split}
\end{equation}
To deduce the desired bound, we must combine \eqref{dec1} with the following interior estimates derived in \cite{ErnetaInterior}*{Proposition~1.3}:
\begin{equation}
\label{dec2}
\rho^{n+2} \int_{B_{\rho/2}(y)} |\nabla u|^2 \d x \leq 
C  \|u\|_{L^1(B_1^{+})}^2
\quad \text{ for all } B_{\rho}(y) \subset B_1^{+}.
\end{equation}

We now claim that for all balls $B_{\rho}(y) \subset B_1$ (not necessarily contained in $B_1^{+}$) and every $\delta \in (0,1)$, we have
\begin{equation}
\label{dec3}
\begin{split}
\rho^{n+2}\int_{\partial \R^{n}_+ \cap B_{\rho/2}(y)}|\nabla u|^2\,dx
\leq & C \delta \rho^{n+2}\int_{\partial \R^{n}_+ \cap B_{\rho}(y)}|\nabla u|^2\,dx
+C\delta^{-2-\frac{3n}{2}} \|u\|_{L^1(B_1^{+})}^2.
\end{split}
\end{equation}
This is achieved by a simple covering argument. 
The key observation is that $\R^{n}_{+} \cap B_{\rho/2}(y)$ can be covered by a dimensional number of balls $\{B_{\rho/16}(y_i)\}_{i}$ and $\{B_{3 \rho/16}(z_j)\}_{j}$, where $y_i$ are such that $B_{\rho/8}(y_i) \subset \R^{n}_{+}\cap B_{\rho}(y) \subset B_1^{+}$ are interior balls,
while $z_j \in \partial \R^{n}_+$ satisfy $B_{3\rho/8}^{+}(z_j) \subset \R^{n}_{+} \cap B_{\rho}(y) \subset B_1^{+}$. 
Applying \eqref{dec2} to the interior balls and \eqref{dec1} to the boundary balls, it is not hard to deduce \eqref{dec3}.
For more details, we refer the reader to the proof of Lemma~8.2 in \cite{CabreQuant}.

By \eqref{dec3},
applying Simon's lemma
to the subadditive quantity $B \mapsto \|\nabla u\|_{L^2(\R^n_+ \cap B)}^2$
now yields the claim.
\end{proof}

Following ideas from~\cite{CabreFigalliRosSerra},
the higher integrability estimate in Theorem~\ref{thm:holder} will now be a direct consequence of the Hessian estimates in Theorem~\ref{thm:sz} and of Lemma~\ref{lemma:l2l1}.
%

\begin{proof}[Proof of Theorem~\ref{thm:holder}]

There are three steps in our proof.
First, 
by the divergence theorem and Theorem~\ref{thm:sz},
we control the surface integral of $|\nabla u|^2$ on every level set of $u$ 
by the Dirichlet energy.
Secondly, using coarea formula, H\"{o}lder, and Sobolev inequality, 
we will bound the $L^{2+\gamma}$ norm of the gradient by the $L^2$ norm.
Finally, Lemma~\ref{lemma:l2l1} will yield the final estimate in terms of the $L^1$ norm of the solution.
All these bounds are shown under a smallness condition on the coefficients which is removed in the last step.

\vspace{3mm}\noindent
\textbf{Step 1:}
{\it We prove that, if $\varepsilon \leq \varepsilon_0$, then for a.e. $t \in \R$ we have}
\[
\int_{\{u = t\} \cap B_{1/2}} |\nabla u|^2 \d \mathcal{H}^{n-1} \leq  
C  \|\nabla u\|_{L^2(B_{1})}^2,
\]
{\it where $\varepsilon_0 > 0$ and $C$ are universal.}

Since $\left|\div\big( |\nabla u| \nabla u\big) \right| \leq C |D^2 u|  |\nabla u|$, by \eqref{ineq:hessgrad} in Theorem~\ref{thm:sz},
for $\varepsilon \leq \varepsilon_0$
we have
\begin{equation}
\label{sob1}
\begin{split}
\left\|\div\big( |\nabla u| \nabla u\big)\right\|_{L^1(B_{4/7}^{+})} 
\leq C  \|\nabla u\|_{L^2(B_{1}^{+})}^2.
\end{split}
\end{equation}
Consider a cut-off function $\eta\in C^{\infty}_c(B_{4/7})$ with $\eta = 1$ in $B_{1/2}$ and $0 \leq \eta \leq 1$.
By the divergence theorem, for a.e. $t \in \R$ we have
\[\begin{split}
&\int_{\{u = t\} \cap B^{+}_{1/2}} |\nabla u|^2 \d \hcal^{n-1} \\
& \quad \quad  \leq \int_{\{u = t\} \cap B^{+}_{1} \cap \{\nabla u \neq 0\}}  |\nabla u|^2 \eta^2 \d \hcal^{n-1}\\
& \quad \quad = - \int_{\{u > t\} \cap B^{+}_{1} \cap \{\nabla u \neq 0\}} \div\big(|\nabla u| \nabla u \, \eta^2 \big) \d x 
- \int_{\{u > t\} \cap \partial^0 B_1^{+} \cap \{\nabla u \neq 0\}} |\nabla u|^2 \, \eta^2 \d x \\
& \quad \quad\leq \int_{B^{+}_{4/7}} |\nabla u|^2 |\nabla (\eta^2)| \d x + \int_{B_{4/7}^{+}} \big|\div\big(|\nabla u| \nabla u\big)\big| \eta^2 \d x
\end{split}\]
and \eqref{sob1} now yields the claim

\vspace{3mm}\noindent
\textbf{Step 2:}
{\it We prove that, if $\varepsilon \leq \varepsilon_0$, then}
\[
\|\nabla u\|_{L^{2+\gamma}(B_{1/2}^{+})} \leq C \| \nabla u\|_{L^2(B_1^{+})},
\]
{\it where $\gamma > 0$ is dimensional and
$\varepsilon_0 > 0$ and
$C$ are universal constants.}

Multiplying by a constant, we may assume that $\|\nabla u\|_{L^2(B_1^{+})} = 1$.

Letting $h(t) = \max\{1, t\}$, by the Sobolev embedding for functions vanishing on $\partial^0 B_1^{+}$, 
\begin{equation}
\label{sob2}
\begin{split}
&\int_{\R^+} \d t \int_{\{u = t\} \cap B_1^{+} \cap \{|\nabla u| \neq 0\}} \d \hcal^{n-1} h(t)^{p} |\nabla u|^{-1} \\
& \quad  \quad  \quad  \quad  \quad  \quad \leq |B_1^{+} \cap \{u < 1\}| + \int_{B_1^{+}} u^p \d x \leq C
\end{split}
\end{equation}
for some $p > 2$.
Choosing dimensional constants $q >1$ and $\theta \in (0, 1/3)$ such that $p/q = (1-\theta)/\theta$, we obtain
\[
\begin{split}
\int_{B_{1/2}^{+}} |\nabla u|^{3 - 3\theta} \d x &= \int_{\R^{+}} \d t \int_{\{u = t\} \cap B_{1/2}^{+} \cap \{|\nabla u| \neq 0\}} \d \hcal^{n-1} h(t)^{p\theta - q(1-\theta)} |\nabla u|^{-\theta + 2 (1 - \theta)}\\
&\leq 
\left(\int_{\R^+} \d t \int_{\{u = t\} \cap B_1^{+} \cap \{|\nabla u| \neq 0\}} \d \hcal^{n-1} h(t)^{p} |\nabla u|^{-1} \right)^{\theta}\\
& \quad  \quad  \quad  \quad \cdot \left( \int_{\R^{+}} h(t)^{-q} \d t \int_{\{u = t\} \cap B_{1/2}^{+}} \d \hcal^{n-1} |\nabla u|^2 \right)^{1-\theta}.
\end{split}
\]
By Step 1 and \eqref{sob2}, it follows that
\[
\int_{B_{1/2}^{+}} |\nabla u|^{3- 3\theta} \d x \leq C,
\]
which was the claim.

\vspace{3mm}\noindent
\textbf{Step 3:}
{\it Conclusion.}

Combining Step 2 (rescaled) and Lemma~\ref{lemma:l2l1}, we deduce that our class of stable solutions satifies
\begin{equation}
\label{sob3}
\|\nabla u\|_{L^{2+\gamma}(B_{1/4}^{+})} \leq C \|u\|_{L^1(B_1^{+})} \quad \text{ for } \varepsilon \leq \varepsilon_0,
\end{equation}
where $\gamma > 0$ is dimensional and $\varepsilon_0 > 0$ and $C$ are universal.

To conclude, we apply a simple covering argument.
Let $\delta \in (0,1)$ be sufficiently small such that 
\begin{equation}
\label{sob4}
\delta \left(\|DA \|_{L^{\infty}(B_1^+)} + \|\vv\|_{L^{\infty}(B_1^+)} \right) \leq \varepsilon_0.
\end{equation}
First, we cover the lower boundary $\partial^0 B_{1/2}^{+}$ by a finite number of balls $B_{\delta/4}(y_i)$ with $y_i \in \partial^0 B_1^+$,
taking $\delta > 0$ smaller if necessary so that $B_{\delta}(y_i) \subset B_{1}$.
Next, we cover $\overline{B_{1/2}^{+}} \setminus \left( \cup_{i} B_{\delta/4}(y_i) \right)$ by balls $B_{\widetilde{\delta}/2}(z_i)$ with a smaller radius $\widetilde{\delta} > 0$ such that $B_{\widetilde{\delta}}(z_i) \subset B_{1}^{+}$.
Thus we obtain a covering of $B_{1/2}^{+}$ by 
half-balls $\{B^{+}_{\delta/4}(y_i)\}_{i}$ (centered at the boundary)
and interior balls $\{B_{\widetilde{\delta}/2}(z_i)\}_{i}$,
satisfying $B^{+}_{\delta}(y_i) \subset B_{1}^{+}$ and $B_{\widetilde{\delta}}(z_i) \subset B_{1}^{+}$, respectively.
Notice that, by \eqref{sob4}, the radii $\delta$ and $\widetilde{\delta}$
as well as the number of balls depend only on $n$, 
$\varepsilon_0$, $\|D A\|_{L^{\infty}(B_1^+)}$, and $\|\vv\|_{L^{\infty}(B_1^+)}$.

Thanks to \eqref{sob4}, the function $u(y_i + \delta \cdot)$ vanishing on $\partial^0 B_1^+$ is a stable solution of a semilinear equation in $B_1^+$,
with coefficients $A^{y_i, \delta} = A(y_i + \delta \cdot)$ and $\vv^{y_i, \delta} = \delta \, \vv(y_i + \delta \cdot)$
such that $\|D A^{y_i, \delta}\|_{L^{\infty}} + \|\vv^{y_i, \delta}\|_{L^{\infty}} \leq \varepsilon_0$.
From \eqref{sob3} now we deduce
\begin{equation}
\label{sob5}
\|\nabla u\|_{L^{2+\gamma}(B_{\delta/4}^{+}(y_i))} \leq C_{\delta} \|u\|_{L^1(B_{\delta}^{+}(y_i))},
\end{equation}
where $C_{\delta}$ depends only on $n$, $\elliptic$, $\bounded$, and $\delta$.
For the interior balls $B_{\widetilde{\delta}/4}(z_i)$,
we need the following interior estimates from~\cite{ErnetaInterior}*{Theorem~1.1}:
\begin{equation}
\label{sob6}
\| \nabla u\|_{L^{2+\gamma}(B_{\widetilde{\delta}/2}(z_i))} \leq C_{\widetilde{\delta}} \|u\|_{L^1(B_{\widetilde{\delta}}(z_i))},
\end{equation}
where $C_{\widetilde{\delta}}$ depends only on $n$, $\elliptic$, $\bounded$, and $\widetilde{\delta}$.

By \eqref{sob5} and \eqref{sob6}, we finally obtain
\[
\begin{split}
\|\nabla u\|_{L^{2+\gamma}(B_{1/2}^{+})} & \leq 
\sum_{i} \| \nabla u \|_{L^{2+\gamma}(B^{+}_{\delta/4}(y_j))}
+\sum_{i} \|\nabla u\|_{L^{2+\gamma}(B_{\widetilde{\delta}/2}(z_i))}\\
& \leq C_{\delta} \sum_{i} \| u\|_{L^{1}(B^{+}_{\delta}(y_i))} 
+C_{\widetilde{\delta}} \sum_{i} \| u\|_{L^{1}(B_{\widetilde{\delta}}(z_i))} \\
& \leq C\|u\|_{L^1(B_1^+)},
\end{split}
\]
where the last constant depends only on $n$, $\elliptic$, $\bounded$,
$\|D A\|_{L^{\infty}(B_1^+)}$, and $\|\vv\|_{L^{\infty}(B_1^+)}$.
This concludes the proof of the theorem.
\end{proof}

\begin{remark}
\label{remark:gehring}
It is also possible to deduce a higher integrability of the gradient from Lemma~\ref{lemma:l2l1} directly by applying Gehring's lemma~\cite{Gehring}.
However, by that method, the integrability exponent 
in Theorem~\ref{thm:holder} would no longer be dimensional (i.e., depending only on $n$),
but would additionally depend on the ellipticity constants.\footnote{Indeed, combining Lemma~\ref{lemma:l2l1} with the analogous interior estimates in~\cite{ErnetaInterior}*{Proposition~1.3}, by Poincar\'{e}'s inequality and a scaling and covering argument, it is not hard to show that the (say) even reflection of $\nabla u$ with respect to $\{x_n = 0\}$ satisfies $\left(R^{-n} \int_{B_R(x)} |\nabla u|^2 \right)^{1/2} \leq C_1 R^{-n} \int_{B_{2R}(x)} |\nabla u|$ for any ball $B_{2R}(x) \subset B_1$, where $C_1 = C_1(n, \elliptic, \bounded)$ is a universal constant. Applying Gehring's lemma (for instance, by Theorem~6.38 in~\cite{GiaquintaMartinazzi}) we now obtain an estimate $\|\nabla u\|_{L^p(B_{1/2}^+)} \leq C \|\nabla u\|_{L^2(B_1^+)}$ for some $p = p(n, C_1) > 2$ and $C = C(n, C_1).$}
Thus, the techniques in~\cite{CabreFigalliRosSerra} give a more precise control of the integrability exponent than Gehring's lemma.
For instance, following the proof above, it is easy to see that one can take any $\gamma(n) < \frac{4}{3n-2}$.
\end{remark}

We conclude this section by stating a corollary of the higher integrability and Hessian estimates that will be useful in our next paper~\cite{ErnetaBdy2}.
It consists of two simple estimates on annuli that
can be proven by a standard covering argument,
combining Theorem~\ref{thm:holder} (respectively Theorem~\ref{thm:sz} and Lemma~\ref{lemma:l2l1})
with the analogous interior estimates in~\cite{ErnetaInterior}*{Theorem~1.1} (respectively in~\cite{ErnetaInterior}*{Proposition~1.3 \& Remark~3.4}).

\begin{corollary}
\label{cor:annuli}
Let $u \in W^{3,p}(B_1^+)$, for some $p > n$,
be a nonnegative stable solution of 
$- L u = f(u)$ in $B_{1}^{+}$, with $u = 0$ on $\partial^0 B_{1}^{+}$.
Assume that $f \in C^1(\R)$ is nonnegative and nondecreasing.
Assume that $L$ satisfies conditions~\eqref{elliptic},~\eqref{reg:coeffs}, and~\eqref{reg:b} in $\Omega = B_1^{+}$, and
\[
\|D A\|_{L^{\infty}(B_{1}^{+})} + \|\vv\|_{L^{\infty}(B_{1}^{+})} \leq \varepsilon
\]
for some $\varepsilon > 0$.
Let $0 < \rho_1 < \rho_2 < \rho_3 < \rho_4 \leq 1$.

Then
\[
\|\nabla u\|_{L^{2+\gamma}(A_{\rho_2, \rho_3}^{+})} \leq C_{\varepsilon, \rho_i} \|u\|_{L^1(A_{\rho_1, \rho_4}^{+})}
\]
and
\[
\|D^2 u \|_{L^1(A_{\rho_2, \rho_3}^{+})} \leq C_{\varepsilon, \rho_i} \|u\|_{L^1(A_{\rho_1, \rho_4}^{+})},
\]
where $C_{\varepsilon, \rho_i}$ is a constant depending only on $n$, $\elliptic$, $\bounded$, $\varepsilon$, $\rho_1$, $\rho_2$, $\rho_3$, and $\rho_4$.
\end{corollary}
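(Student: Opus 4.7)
The plan is to reduce both estimates on the half-annulus to the corresponding estimates on half-balls and on interior balls, via the same type of covering argument used in Step~3 of the proof of Theorem~\ref{thm:holder}. Write $A_{\rho_i, \rho_j}^{+} = \{\rho_i < |x| < \rho_j\} \cap \{x_n > 0\}$ and set $\sigma := \tfrac{1}{4}\min\{\rho_2-\rho_1,\rho_4-\rho_3\}$, so that any ball of radius $\sigma$ centered at a point of $\overline{A_{\rho_2,\rho_3}^{+}}$ is contained in $A_{\rho_1,\rho_4}^{+}$ when its center lies sufficiently far from $\{x_n=0\}$, and any half-ball of radius $\sigma$ centered at a point of $\partial^0 \R^n_+\cap\overline{A_{\rho_2,\rho_3}^{+}}$ is contained in $A_{\rho_1,\rho_4}^{+}$ (up to its flat part).

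First I would fix a small covering radius $r \leq \sigma$ which also satisfies $r\varepsilon \leq \varepsilon_0$, where $\varepsilon_0$ is the universal constant from Theorem~\ref{thm:sz} and Lemma~\ref{lemma:l2l1}. Then I would cover $\overline{A_{\rho_2,\rho_3}^+}$ by finitely many half-balls $\{B_{r/4}^+(y_i)\}_i$ centered on $\partial^0 \R^n_+$ (handling the points near the flat boundary) together with finitely many interior balls $\{B_{r/4}(z_j)\}_j$ (handling points with $x_n$ bounded away from $0$), where the number of balls depends only on $n$ and the $\rho_i$.

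For each boundary half-ball $B_r^+(y_i)$, the rescaled function $u^{y_i,r}(x) := u(y_i + rx)$ is a nonnegative stable solution in $B_1^+$ of a semilinear equation whose coefficients satisfy
\[
\|D A^{y_i,r}\|_{L^\infty(B_1^+)} + \|\vv^{y_i,r}\|_{L^\infty(B_1^+)} \leq r\varepsilon \leq \varepsilon_0.
\]
Therefore Theorem~\ref{thm:holder} applied to $u^{y_i,r}$ gives
\[
\|\nabla u\|_{L^{2+\gamma}(B_{r/2}^+(y_i))} \leq C_{r,\varepsilon}\,\|u\|_{L^1(B_r^+(y_i))},
\]
while the combination of Theorem~\ref{thm:sz} with Lemma~\ref{lemma:l2l1} (after rescaling) yields
\[
\|D^2 u\|_{L^1(B_{r/2}^+(y_i))} \leq C_{r,\varepsilon}\,\|u\|_{L^1(B_r^+(y_i))}.
\]
For the interior balls $B_r(z_j) \subset B_1^+$, the analogous estimates of~\cite{ErnetaInterior}*{Theorem~1.1} and~\cite{ErnetaInterior}*{Proposition~1.3 \& Remark~3.4}, again applied to a rescaled solution, furnish the same type of bounds on $B_{r/2}(z_j)$ controlled by $\|u\|_{L^1(B_r(z_j))}$.

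Finally, summing these local bounds over the finite covering (using sub-additivity of the $L^{2+\gamma}$ and $L^1$ norms and the inclusion $B_r^+(y_i),B_r(z_j) \subset A_{\rho_1,\rho_4}^+$ guaranteed by the choice of $r$) yields the two claimed estimates, with constants depending only on $n$, $\elliptic$, $\bounded$, $\varepsilon$, and the $\rho_i$. There is no conceptual obstacle; the only slightly delicate point is the simultaneous bookkeeping needed to ensure that (i) the rescaling preserves the smallness $r\varepsilon\leq \varepsilon_0$ required by the half-ball estimates, and (ii) the enlarged balls $B_r^+(y_i)$, $B_r(z_j)$ do not exit the larger annulus $A_{\rho_1,\rho_4}^+$. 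Both conditions are handled by choosing $r$ as above in terms of the $\rho_i$ and $\varepsilon$, after which everything is routine.
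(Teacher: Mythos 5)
Your proposal is correct and follows essentially the same route the paper indicates: a standard covering of the half-annulus by boundary half-balls and interior balls, each small enough that the rescaling both keeps the coefficients under the universal smallness threshold $\varepsilon_0$ and stays inside the larger annulus, followed by applying Theorem~\ref{thm:holder} (resp.\ \eqref{ineq:hessian} of Theorem~\ref{thm:sz} combined with Lemma~\ref{lemma:l2l1}) on the boundary half-balls and the interior estimates of \cite{ErnetaInterior} on the interior balls. The only cosmetic slip is that the number of covering balls also depends on $\varepsilon$ (through the constraint $r\varepsilon \leq \varepsilon_0$), not just on $n$ and the $\rho_i$, but this is already reflected in your stated dependence of the final constant and does not affect the argument.
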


\appendix

\medskip\medskip

\section{Two interpolation inequalities}
\label{app:interpolation}

We recall two interpolation inequalities in cubes by Cabr\'{e}~\cite{CabreRadial} (with elementary proofs in that paper).
In the first one, the $L^2$ norm of the gradient is bounded by a weighted $L^1$ norm of the Hessian and the $L^2$ norm of the function.
The second inequality controls this last integral by the $L^2$ norm of the gradient and the $L^1$ norm of the function.

\begin{proposition}[\cite{CabreRadial}]
\label{interpol}
Let $Q = (0,1)^n \subset \R^{n}$ and $u \in C^2(\overline{Q})$.

Then, for every $\delta \in (0,1)$,
\[
\| \nabla u \|_{L^2(Q)}^2 \leq C \left(  \delta \| \,|\nabla u| \, D^2 u \, \|_{L^1(Q)} + \delta^{-2} \|u\|_{L^2(Q)}^2 \right),
\]
where $C$ is a constant depending only on $n$.
\end{proposition}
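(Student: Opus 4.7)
The plan is to reduce the $n$-dimensional inequality to a one-dimensional interpolation on intervals of length $\delta$, applied separately in each coordinate direction. Once such a 1D bound is available, I would partition $[0,1]$ into order $1/\delta$ consecutive subintervals of length $\delta$, sum the bounds over them, and then integrate over the transverse variables.

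For the core one-dimensional step, fix $x' \in (0,1)^{n-1}$ and write $v(t) := u(t, x')$. The goal is to prove
\[
\int_0^\delta v'(t)^2\, dt \leq \frac{C}{\delta^2}\int_0^\delta v(t)^2\, dt + C\delta \int_0^\delta |v' v''|(t)\, dt,
\]
for a dimensional constant $C$. By the mean value theorem, there exists $\xi \in [0,\delta]$ with $v'(\xi) = \delta^{-1}(v(\delta) - v(0))$. From the identity $v'(t)^2 = v'(\xi)^2 + 2\int_\xi^t v' v''\, ds$, I would deduce the pointwise bound
\[
v'(t)^2 \leq \frac{2}{\delta^2}\bigl(v(0)^2 + v(\delta)^2\bigr) + 2 \int_0^\delta |v' v''|\, ds,
\]
and integrating in $t$ over $[0,\delta]$ gives the right-hand side structure, up to the two boundary values $v(0)^2$ and $v(\delta)^2$. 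To control these, I would use the trace-type identity $v(0)^2 \leq \delta^{-1}\int_0^\delta v^2\, dt + 2\int_0^\delta |v v'|\, dt$ (and the analogous bound for $v(\delta)^2$), obtained by averaging $v(0)^2 = v(t)^2 - 2\int_0^t v v'\, ds$ in $t \in [0,\delta]$. This produces a cross term $\int |v v'|$, which I would split by Young's inequality $|vv'| \leq M\delta^{-1}v^2 + M^{-1}\delta\, v'^2$, choosing $M$ large enough so that the resulting $v'^2$ contribution can be absorbed into the left-hand side of the 1D estimate.

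Finally, summing the resulting one-dimensional bounds over a cover of $[0,1]$ by $O(1/\delta)$ consecutive intervals of length $\delta$ yields the 1D inequality on the unit interval; integrating over the transverse variables $x' \in (0,1)^{n-1}$ then produces the $n$-dimensional bound for $\int_Q u_i^2$ in each direction $i$. Adding over $i = 1, \ldots, n$ and using the pointwise inequality $\sum_i |u_i u_{ii}|\leq n\,|\nabla u|\,|D^2 u|$ gives the proposition with a constant depending only on $n$. The main technical point is choosing the parameter in Young's inequality so that the cross term coming from the trace estimate is correctly absorbed; apart from that, the computation is elementary.
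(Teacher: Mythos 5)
The paper does not supply its own proof of this proposition: it simply cites Cabr\'{e}'s preprint \cite{CabreRadial}, stating that elementary proofs are given there. So there is no in-paper argument for me to compare against; I can only assess your proof on its own merits.

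Your argument is correct. The one-dimensional core is sound: fixing $x'$ and setting $v(t)=u(t,x')$, the mean value theorem produces $\xi$ with $v'(\xi)=\delta^{-1}(v(\delta)-v(0))$; the fundamental theorem of calculus applied to $t\mapsto v'(t)^2$ gives $v'(t)^2=v'(\xi)^2+2\int_\xi^t v'v''$, so that
\[
\int_0^\delta v'(t)^2\,dt \le \frac{2}{\delta}\bigl(v(0)^2+v(\delta)^2\bigr)+2\delta\int_0^\delta |v'v''|\,ds.
\]
The trace bound $v(0)^2\le \delta^{-1}\int_0^\delta v^2+2\int_0^\delta|vv'|$ (and the analogue at $\delta$), followed by Young with a sufficiently large parameter $M$, produces a $v'^2$ term with coefficient $O(1/M)$ that is absorbed cleanly since both sides are integrals over the same interval $[0,\delta]$. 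Partitioning $[0,1]$ into $N=\lceil 1/\delta\rceil$ intervals of common length $\ell=1/N\in[\delta/2,\delta]$ (so that $\ell^{-2}\le 4\delta^{-2}$) gives the 1D inequality on $(0,1)$ with a dimensional constant; integrating over $x'\in(0,1)^{n-1}$, summing over $i$, and using $\sum_i |u_iu_{ii}|\le |\nabla u|\,|D^2 u|$ (Cauchy--Schwarz) completes the argument. Every constant you accumulate is purely dimensional, as required. The only points that need explicit care in a written-up version are the choice of the Young parameter (which you flag) and the rescaling from interval length $\ell$ to $\delta$, both of which are straightforward.
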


\begin{proposition}[\cite{CabreRadial}]
\label{nash}
Let $Q = (0,1)^n \subset \R^{n}$ and $u \in C^2(\overline{Q})$.

Then, for every $\widetilde{\delta} \in (0,1)$,
\[
\| u\|_{L^2(Q)}^2 \leq C \left( \widetilde{\delta}^2 \|\nabla u\|_{L^2(Q)}^2 + \widetilde{\delta}^{-n} \|u\|_{L^1(Q)}^2 \right),
\]
where $C$ is a constant depending only on $n$.
\end{proposition}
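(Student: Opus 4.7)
The plan is to prove the inequality by a partition argument, splitting $Q$ into small sub-cubes of side-length comparable to $\widetilde{\delta}$ and using the classical Poincaré inequality on each sub-cube. This is essentially the bounded-domain analogue of Nash's inequality.

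More precisely, I would fix an integer $N = \lceil 1/\widetilde{\delta}\rceil$ (so that $\widetilde{\delta}/2 \leq 1/N \leq \widetilde{\delta}$) and partition $Q = (0,1)^n$ into $N^n$ congruent sub-cubes $\{Q_j\}$ of side-length $1/N$, pairwise disjoint up to measure zero. On each $Q_j$, denote by $\overline{u}_j := |Q_j|^{-1}\int_{Q_j} u \,\d x$ the average of $u$. Writing $u = \overline{u}_j + (u - \overline{u}_j)$ and noting that the cross term integrates to zero, one has
\[
\int_{Q_j} u^2 \,\d x = |Q_j|\, \overline{u}_j^{\,2} + \int_{Q_j} (u - \overline{u}_j)^2 \,\d x.
\]
For the mean term, Jensen (or Cauchy--Schwarz) gives $|Q_j|\, \overline{u}_j^{\,2} \leq |Q_j|^{-1}\bigl(\int_{Q_j} |u|\,\d x\bigr)^2 \leq N^{n}\bigl(\int_{Q_j} |u|\,\d x\bigr)^2$, while the oscillation term is controlled via the Poincaré inequality on the sub-cube, which, after rescaling from the unit cube, yields
\[
\int_{Q_j} (u - \overline{u}_j)^2 \,\d x \leq C_n\, N^{-2} \int_{Q_j} |\nabla u|^2 \,\d x,
\]
with $C_n$ depending only on $n$.

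Summing both estimates over $j$ gives
\[
\|u\|_{L^2(Q)}^2 \leq N^{n}\sum_{j} \Bigl(\int_{Q_j} |u|\,\d x\Bigr)^{2} + C_n\, N^{-2}\,\|\nabla u\|_{L^2(Q)}^2.
\]
The key elementary fact now is the superadditivity $\sum_j a_j^2 \leq (\sum_j a_j)^2$ for nonnegative $a_j$, applied with $a_j = \int_{Q_j} |u| \,\d x$, which gives $\sum_j a_j^2 \leq \|u\|_{L^1(Q)}^2$. Combining this with the bounds $N^{n} \leq (2/\widetilde{\delta})^{n}$ and $N^{-2} \leq \widetilde{\delta}^2$, we obtain
\[
\|u\|_{L^2(Q)}^2 \leq 2^n \widetilde{\delta}^{-n} \|u\|_{L^1(Q)}^2 + C_n\, \widetilde{\delta}^2 \|\nabla u\|_{L^2(Q)}^2,
\]
which is the desired inequality with a constant depending only on $n$.

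There is essentially no hard step here; the only mildly delicate point is the bookkeeping needed when $1/\widetilde{\delta}$ is not an integer, which is handled cleanly by choosing $N = \lceil 1/\widetilde{\delta}\rceil$ and absorbing dimensional factors. The only substantive ingredient is the scale-invariant Poincaré inequality on a cube, applied at scale $1/N$.
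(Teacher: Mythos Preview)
Your argument is correct. The decomposition into $N^n$ sub-cubes, the use of the Poincar\'e inequality at scale $1/N$ for the oscillation part, the trivial bound on the mean part, and the superadditivity $\sum_j a_j^2 \leq \bigl(\sum_j a_j\bigr)^2$ are all valid, and the bookkeeping with $N = \lceil 1/\widetilde{\delta}\rceil$ is handled properly.

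As for comparison with the paper: the paper does not actually prove this proposition. It merely recalls the statement and cites~\cite{CabreRadial}, noting that the proof there is elementary. Your partition-plus-Poincar\'e argument is exactly the kind of elementary proof one expects for this Nash-type inequality on a bounded domain, so there is nothing to contrast.
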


%
%
%
%

\section{Absorbing errors in larger balls}
\label{app:simon}

We recall a celebrated device of Simon~\cite{Simon} for absorbing errors in large balls when controlling quantities in smaller balls:

 \begin{lemma}[\cite{Simon}]
Let $\beta\geq 0$ and $C_0>0$. 
Let  $\mathcal{B}$ be the class of all open balls $B$ contained in the unit ball $B_1$ of $\R^n$ and let 
$\sigma \colon \mathcal{B} \rightarrow [0,+\infty)$ satisfy the following subadditivity property:
\[
\sigma(B)\leq \sum_{j=1}^N \sigma(B^j) \quad \mbox{ whenever }  N\in \mathbb{Z}^+, \{B^j\}_{j=1}^N \subset \mathcal{B}, \text{ and } B \subset \bigcup_{j=1}^N B^j. 
\]

It follows that there exists a constant $\delta>0$, which depends only on $n$ and $\beta$, such that if
\[
\rho^\beta \sigma\left(B_{\rho/2}(y)\right) \leq \delta \rho^\beta \sigma\left(B_\rho(y)\right)+ C_0\quad \mbox{whenever }B_\rho(y)\subset B_1,
\]
then
\[ 
\sigma(B_{1/2}) \leq C C_0
\]
for some constant $C$ which depends only on $n$ and $\beta$.
\end{lemma}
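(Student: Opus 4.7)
The strategy is a self-improvement argument based on a scale-invariant supremum. My first move is to introduce
\[
\theta \;:=\; \sup\bigl\{\, r^\beta \sigma(B_r(y)) : B_r(y)\in\mathcal{B}\,\bigr\}.
\]
A preliminary task is to check that $\theta$ is a finite nonnegative real number. This follows from two observations: first, the subadditivity hypothesis applied with $N=1$ encodes the monotonicity property $\sigma(B)\le \sigma(B')$ whenever $B\subset B'$; second, $B_1\in\mathcal{B}$, so $\sigma(B_1)<\infty$ by hypothesis. Combining these with $r\le 1$, every admissible ball satisfies $r^\beta\sigma(B_r(y))\le \sigma(B_1)$, so $0\le\theta\le\sigma(B_1)<\infty$.

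The heart of the proof is the self-improving estimate
\begin{equation}\label{eq:simon-bootstrap}
\theta \;\le\; C_1\,\delta\,\theta \;+\; C_1\,C_0,
\end{equation}
for some constant $C_1=C_1(n,\beta)$. Once \eqref{eq:simon-bootstrap} is in place, one selects the threshold $\delta_0:=1/(2C_1)$ in the statement of the lemma; for any $\delta\le\delta_0$ the $C_1\delta\theta$ term may be absorbed into the left-hand side, yielding $\theta\le 2C_1 C_0$. The conclusion follows directly from the definition of $\theta$ applied to the specific ball $B_{1/2}=B_{1/2}(0)$: indeed $(1/2)^\beta \sigma(B_{1/2}) \le \theta$, so
\[
\sigma(B_{1/2}) \;\le\; 2^\beta\,\theta \;\le\; 2^{\beta+1}\,C_1\, C_0.
\]

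To prove \eqref{eq:simon-bootstrap}, fix an arbitrary admissible ball $B_{r_0}(y_0)\in\mathcal{B}$ and control $r_0^\beta\sigma(B_{r_0}(y_0))$ by splitting into two regimes. In the interior regime $|y_0|\le 1-2r_0$ the doubled ball $B_{2r_0}(y_0)$ still sits inside $B_1$, and the hypothesis applied at scale $2r_0$ gives at once
\[
r_0^\beta \sigma(B_{r_0}(y_0)) \;\le\; 2^{-\beta}\bigl[\delta\,(2r_0)^\beta\sigma(B_{2r_0}(y_0))+C_0\bigr] \;\le\; 2^{-\beta}\,\delta\,\theta + 2^{-\beta}\,C_0.
\]
In the boundary regime $|y_0|>1-2r_0$ the doubled ball exits $B_1$ and the hypothesis is unavailable directly; this is where I invoke subadditivity. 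I cover $B_{r_0}(y_0)$ by a dimensional number $N=N(n)$ of admissible balls $\{B_{s_j}(z_j)\}_{j=1}^N$ selected so that each doubled parent $B_{2s_j}(z_j)$ still lies inside $B_1$ and so that each $s_j$ is bounded below by a dimensional fraction of $r_0$ (this is arranged by a Vitali-type selection, with radii $s_j$ tapered near $\partial B_1$). Subadditivity combined with the hypothesis applied on each parent $B_{2s_j}(z_j)\subset B_1$ then gives
\[
\sigma(B_{r_0}(y_0)) \;\le\; \sum_{j=1}^N \sigma(B_{s_j}(z_j)) \;\le\; \sum_{j=1}^N (2s_j)^{-\beta}\bigl[\delta\,\theta + C_0\bigr],
\]
which, after multiplying by $r_0^\beta$ and using $s_j\ge c(n)r_0$, produces a bound of the form $r_0^\beta\sigma(B_{r_0}(y_0))\le C_1\delta\theta+C_1 C_0$. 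Taking the supremum over $B_{r_0}(y_0)\in\mathcal{B}$ yields \eqref{eq:simon-bootstrap}.

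The main obstacle is carrying out the Vitali-type covering in the boundary regime: if the ball $B_{r_0}(y_0)$ is tangent or nearly tangent to $\partial B_1$, then the points of $B_{r_0}(y_0)$ arbitrarily close to $\partial B_1$ cannot be covered by balls whose doubles fit in $B_1$ without shrinking the covering radii to zero near $\partial B_1$. Controlling this shrinkage so that the sum $\sum(2s_j)^{-\beta}$ stays bounded by a dimensional multiple of $r_0^{-\beta}$, while keeping the total number of covering balls dimensional, is the chief technical step; it relies on the standard observation that $B_{r_0}(y_0)$ only reaches into a boundary layer of width at most $r_0$, so the relevant Vitali (or dyadic Whitney) selection has bounded cardinality at a universal cost absorbed into $C_1(n,\beta)$.
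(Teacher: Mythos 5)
The paper does not prove this lemma --- it is quoted from \cite{Simon} --- so your argument has to stand on its own, and unfortunately it does not: the boundary regime is not a technicality you can defer, it is where the proof breaks. If $B_{2s}(z)\subset B_1$ and $x\in B_s(z)$, then $1-|x|\ge (1-|z|)-|x-z|> 2s-s=s$, so \emph{every} ball whose double lies in $B_1$ is contained in $\{x: 1-|x|>s\}$; a finite union of such balls with $s_j\ge c(n)r_0$ therefore lies in $B_{1-c(n)r_0}$ and cannot cover a ball $B_{r_0}(y_0)$ that is internally tangent to $\partial B_1$ (which is allowed, since only $B_{r_0}(y_0)\subset B_1$ is assumed). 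Your proposed repair --- tapering the radii near $\partial B_1$ --- forces infinitely many covering balls, so both ``$N$ dimensional'' and $\sum_j (2s_j)^{-\beta}\le C r_0^{-\beta}$ fail (already $\sum_j 1=\infty$ when $\beta=0$). Worse, the bootstrap inequality $\theta\le C_1\delta\theta+C_1C_0$ is \emph{false} for your $\theta=\sup_{B_r(y)\subset B_1} r^\beta\sigma(B_r(y))$, so no covering can rescue it. Indeed, take $\beta=0$ and let $\sigma$ be the measure giving mass $m$ to each dyadic annulus $\{1-2^{-j}<|x|<1-2^{-j-1}\}$, $j=1,\dots,K$. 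Any half-ball $B_{\rho/2}(y)$ with $B_\rho(y)\subset B_1$ sits at distance at least $\rho/2$ from $\partial B_1$ and has diameter $\rho$, hence meets at most four of these annuli; so the hypothesis holds with $C_0=4m$ for every $\delta\ge 0$, while $\theta\ge\sigma(B_1)=Km\gg C_1 C_0$ for $K$ large. Only $\sigma(B_{1/2})$ is controlled by $C_0$; the scale-invariant supremum over \emph{all} balls in $B_1$ is not.

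The mechanism that actually works (Simon's) is an infinite iteration that never approaches $\partial B_1$, with the factor $\delta$ gained at each step beating the multiplication of errors. Cover $B_{1/2}$ by $N=N(n)$ balls of radius $1/8$ whose parents of radius $1/4$ lie in $B_{3/4}$, and apply the hypothesis to each parent: $\sigma(B_{1/2})\le \delta\sum_j\sigma(B_{1/4}(y_j))+N4^{\beta}C_0$. Repeat: at step $k$, cover each radius-$4^{-k}$ ball by $N$ balls of radius $4^{-k}/8$ with parents of radius $4^{-k-1}$; all parents remain in $B_{1/2+\sum_{i\ge1}4^{-i}}=B_{5/6}\subset B_1$, so tangency never occurs. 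After $k$ steps one has $\sigma(B_{1/2})\le \delta^k\sum_{j\le N^k}\sigma(B_{4^{-k}}(y^{(k)}_j))+C_0\sum_{i\le k}N4^{\beta}\,(\delta N4^{\beta})^{i-1}$. Choosing $\delta\le \tfrac{1}{2}N^{-1}4^{-\beta}$ (which depends only on $n$ and $\beta$), the error series converges to $2N4^{\beta}C_0$ and the main term is at most $(\delta N)^k\sigma(B_1)\to 0$ because $\sigma(B_1)<\infty$; hence $\sigma(B_{1/2})\le CC_0$. Your interior-regime computation and the finiteness of the supremum are fine, but the single-absorption strategy built on the global supremum cannot be completed and should be replaced by this iteration.
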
 

\section*{Acknowledgments}

The author wishes to thank Xavier Cabr\'{e} for useful discussions on the topic of this article, as well as for his encouragement over the years.


\begin{bibdiv}
\begin{biblist}

\bib{BerestyckiNirenbergVaradhan}{article}{
      author={Berestycki, H.},
      author={Nirenberg, L.},
      author={Varadhan, S.~R.~S.},
       title={The principal eigenvalue and maximum principle for second-order
  elliptic operators in general domains},
        date={1994},
     journal={Comm. Pure Appl. Math.},
      volume={47},
       pages={47\ndash 92},
}

\bib{Brezis-IFT}{article}{
      author={Brezis, H.},
       title={Is there failure of the inverse function theorem? {M}orse theory,
  minimax theory and their applications to nonlinear differential equations},
        date={2003},
     journal={New Stud. Adv. Math.},
      volume={1},
       pages={23\ndash 33},
}

\bib{BrezisVazquez}{article}{
      author={Brezis, H.},
      author={V\'{a}zquez, J.~L.},
       title={Blow-up solutions of some nonlinear elliptic problems},
        date={1997},
     journal={Rev. Mat. Univ. Complut. Madrid},
      volume={10},
       pages={443\ndash 469},
}

\bib{CabreRadial}{article}{
      author={Cabr\'{e}, X.},
       title={Estimates controlling a function by only its radial derivative
  and applications to stable solutions of elliptic equations},
        date={2022},
     journal={Preprint arXiv 2211.13033},
}

\bib{CabreQuant}{article}{
      author={Cabr\'{e}, X.},
       title={A quantitative proof of the {H}\"{o}lder regularity of stable
  solutions to semilinear elliptic equations},
        date={2022},
     journal={Preprint arXiv 2205.11352},
}

\bib{CabreFigalliRosSerra}{article}{
      author={Cabr\'{e}, X.},
      author={Figalli, A.},
      author={Ros-Oton, X.},
      author={Serra, J.},
       title={Stable solutions to semilinear elliptic equations are smooth up
  to dimension 9},
        date={2020},
     journal={Acta Math.},
      volume={224},
       pages={187\ndash 252},
}

\bib{CabreRosOton-DoubleRev}{article}{
      author={Cabr\'{e}, X.},
      author={Ros-Oton, X.},
       title={Regularity of stable solutions up to dimension $7$ in domains of
  double revolution},
        date={2013},
     journal={Comm. Partial Differential Equations},
      volume={38},
       pages={135\ndash 154},
}

\bib{CabreSanchon}{article}{
      author={Cabr\'{e}, X.},
      author={Sanch\'{o}n, M.},
       title={Geometric-type {S}obolev inequalities and applications to the
  regularity of minimizers},
        date={2013},
     journal={J. Funct. Anal.},
      volume={264},
       pages={303\ndash 325},
}

\bib{CrandallRabinowitz}{article}{
      author={Crandall, M.~G.},
      author={Rabinowitz, P.~H.},
       title={Some continuation and variational methods for positive solutions
  of nonlinear elliptic eigenvalue problems},
        date={1975},
     journal={Arch. Rational Mech. Anal.},
      volume={58},
       pages={207\ndash 218},
}

\bib{Dupaigne}{book}{
      author={Dupaigne, L.},
       title={Stable solutions of elliptic partial differential equations},
   publisher={Chapman and Hall/CRC},
        date={2011},
}

\bib{ErnetaBdy2}{article}{
      author={Erneta, I.~U.},
       title={Boundary {H}\"{o}lder continuity of stable solutions to semilinear elliptic problems in $C^{1,1}$ domains},
        date={2023},
     journal={Forthcoming},
}

\bib{ErnetaInterior}{article}{
      author={Erneta, I.~U.},
       title={Stable solutions to semilinear elliptic equations for operators
  with variable coefficients},
        date={2023},
     journal={Commun. Pure Appl. Anal.},
      volume={22},
       pages={530\ndash 571},
}

\bib{EvansGariepy}{book}{
      author={Evans, L.~C.},
      author={Gariepy, R.~F.},
       title={Measure theory and fine properties of functions},
     edition={Revised},
      series={Textbooks in Mathematics},
   publisher={CRC Press, Boca Raton, FL},
        date={2015},
}

\bib{Gehring}{article}{
      author={Gehring, F.~W.},
       title={The {$L^{p}$}-integrability of the partial derivatives of a
  quasiconformal mapping},
        date={1973},
     journal={Acta Math.},
      volume={130},
       pages={265\ndash 277},
}

\bib{GiaquintaMartinazzi}{book}{
      author={Giaquinta, M.},
      author={Martinazzi, L.},
       title={An introduction to the regularity theory for elliptic systems,
  harmonic maps and minimal graphs},
     edition={Second},
      series={Appunti. Scuola Normale Superiore di Pisa (Nuova Serie) [Lecture
  Notes. Scuola Normale Superiore di Pisa (New Series)]},
   publisher={Edizioni della Normale, Pisa},
        date={2012},
      volume={11},
}

\bib{GilbargTrudinger}{book}{
      author={Gilbarg, D.},
      author={Trudinger, N.~S.},
       title={Elliptic partial differential equations of second order},
     edition={2},
   publisher={Springer Berlin, New York},
        date={2001},
}

\bib{Nedev}{article}{
      author={Nedev, G.},
       title={Regularity of the extremal solution of semilinear elliptic
  equations},
        date={2000},
     journal={C. R. Acad. Sci. Paris S{\'e}r. I Math.},
      volume={330},
       pages={997\ndash 1002},
}

\bib{Nedev2}{article}{
      author={Nedev, G.},
       title={Extremal solutions of semilinear elliptic equations},
        date={2001},
        note={Unpublished preprint},
}

\bib{Simon}{article}{
      author={Simon, L.},
       title={Schauder estimates by scaling},
        date={1997},
     journal={Calc. Var. Partial Differential Equations},
      volume={5},
       pages={391\ndash 407},
}

\bib{SternbergZumbrun1}{article}{
      author={Sternberg, P.},
      author={Zumbrun, K.},
       title={Connectivity of phase boundaries in strictly convex domains},
        date={1998},
     journal={Arch. Rational Mech. Anal.},
      volume={141},
       pages={375\ndash 400},
}

\bib{Villegas}{article}{
      author={Villegas, S.},
       title={Boundedness of extremal solutions in dimension 4},
        date={2013},
     journal={Adv. Math.},
      volume={235},
       pages={126\ndash 133},
}

\end{biblist}
\end{bibdiv}

\end{document}